\newtheorem{thm}{Theorem}[section]
\newtheorem{lem}[thm]{Lemma}
\newtheorem{definition}[thm]{Definition}
\newtheorem{example}[thm]{Example}
\newtheorem{remark}[thm]{Remark}
\theoremstyle{definition}
\keywords{free semigroup actions; shadowable point; pseudo-orbit}
\subjclass[2010]{Primary: 37B05; Secondary: 37B20.}
\begin{document}
     \title[Shadowable points of free semigroup actions]{Shadowable points of free semigroup actions}
     \author[Ritong Li, Dongkui Ma, Jieyan Tang and Xiaojiang Ye]{}

     \email{summerleslieli@163.com}
     \email{dkma@scut.edu.cn}
     \email{kuangrui@scut.edu.cn}
     \email{yexiaojiang12@163.com}

     \date{\today}
     \thanks{{$^{*}$}Corresponding author: Xiaojiang Ye}

     \maketitle
     \centerline{\scshape Ritong Li, Dongkui Ma, Rui Kuang and Xiaojiang Ye$^*$}
     \medskip
     {\footnotesize
	  \centerline{School of Mathematics, South China University of Technology, }
   	  \centerline{Guangzhou 510641, P.R. China}
   }

    \hspace{2mm}
	\begin{abstract}
    The shadowable points of dynamical systems has been well-studied by Morales \cite{MR3535492}. This paper aims to generalize the main results obtained by Morales to free semigroup actions. To this end, we introduce the notion of shadowable points of a free semigroup action. Let $G$ be a free semigroup generated by finite continuous self-maps acting on compact metric space. We will prove the following results for $G$ on compact metric spaces. The set of shadowable points of $G$ is a Borel set. $G$ has the pseudo-orbit tracing property (POTP) if and only if every point is a shadowable point of $G$. The chain recurrent and non-wandering sets of $G$ coincide when every chain recurrent point is a shadowable point of $G$. The space $X$ is totally disconnected at every shadowable point of $G$ under certain condition. 

	\end{abstract}

	\maketitle

	\section{ Introduction }
	The shadowing theory is an important component of the qualitative theory of dynamical systems, producing many interesting and profound results. If any pseudo orbit with sufficient accuracy approaches a certain precise trajectory, then the dynamical system has the shadowing property or pseudo-orbit tracing property(POTP). So far, many researchers have studied homeomorphisms with shadowing property, for examples \cite{MR1289410, MR1727170}. Recently, Morales \cite{MR3535492} introduced the concept of shadowable points by decomposing shadowing property into points. Subsequently Kawaguchi \cite{MR3724498} extended the notion for continuous maps and gave a quantitative version of a Morales'result in \cite{MR3535492}. In addition, Kawaguchi also obtained more and better results. Bahabadi \cite{MR3353566} introduced the definitions of shadowing and average shadowing properties for iterated function systems, and also proved that an iterated function system has the shadowing property if and only if the skew-product transformation corresponding to the iterated function system has the shadowing property. Hui and Ma \cite{MR3842255}, Zhu and Ma \cite{MR4200965} introduced the definitions of $\delta$-pseudo-orbit for free semigroup actions and the pseudo-orbit tracing property for free semigroup actions.
	
	\quad In this paper, we introduce the notion of shadowable points of free semigroup actions. The main work is to extend Theorem 1.1 and Theorem 1.2 of \cite{MR3535492} to free semigroup actions, and improve the proof process of Lemma 2.1, Lemma 2.3 and Theorem 1.2 in \cite{MR3535492} using Lemma 2.2 in \cite{MR3724498} and new conditions. By studying relationship between free semigroup actions and skew-product transformation corresponding to the free semigroup action, we find out relation between the shadowable points of free semigroup actions and the totally disconnected points. Let $X$ be a compact metric space, $f_0,\cdots,f_{m-1}$ continuous self-maps on $X$. Denote $G$ the free semigroup generated by $G_1=\{f_0,\cdots,f_{m-1}\}$ on $X$ and $F:\Sigma_m^+\times X\to\Sigma_m^+\times X$ a skew-product transformation, where $\Sigma_m^+$ denotes the one-side symbol space. The main results of this paper are:

	\begin{thm}\label{thm1.1}
		Let $(X,d)$ be a compact metric space. If $f_0,f_1,\cdots,f_{m-1}$ are continuous self-maps on X and $G$ is the free semigroup generated by $\{f_0,f_1,\cdots,f_{m-1}\}$, then
		\begin{enumerate}
		\item $G$ has the $POTP$ if and only if $Sh(G)=X$;
		\item $Sh(G)$ is a Borel set;
		\item if $CR(G) \subset Sh(G)$, then $CR(G)=\Omega(G)$.
		\end{enumerate}
    Where $POTP$ is the abbreviation of pseudo-orbit tracing property, $Sh(G)$ denotes the set of shadowable points of $G$, $CR(G)$ denotes the set of chain recurrent points of $G$, and $\Omega(G)$ denotes the set of non-wandering points of $G$.
	\end{thm}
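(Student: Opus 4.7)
The plan is to handle the three parts in sequence, reducing each to a compactness and uniform-continuity manipulation of pseudo-orbits adapted to the free-semigroup setting (where each pseudo-orbit carries a word $\omega \in \Sigma_m^+$ in the generators $f_0, \ldots, f_{m-1}$). The forward direction of (1) is immediate: if $G$ has POTP, the uniform $\delta$ it supplies simultaneously witnesses shadowability at every $x \in X$, so $Sh(G) = X$. For the converse I argue by contradiction. Suppose $Sh(G) = X$ yet POTP fails; then there exist $\epsilon_0 > 0$, scales $\delta_k \downarrow 0$, words $\omega_k \in \Sigma_m^+$, and $\delta_k$-pseudo-orbits $\xi_k = \{x^k_n\}_{n \ge 0}$ along $\omega_k$ which are not $\epsilon_0$-shadowed. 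By compactness of $X \times \Sigma_m^+$, pass to a subsequence with $x^k_0 \to x$ and $\omega_k \to \omega$. Shadowability at $x$ with tolerance $\epsilon_0/2$ yields some $\delta > 0$, and uniform continuity of $f_0, \ldots, f_{m-1}$ guarantees that, for $k$ large, the sequence $x, x^k_1, x^k_2, \ldots$ along $\omega_k$ is a $\delta$-pseudo-orbit starting at $x$. Its $(\epsilon_0/2)$-shadowing point $z_k$ then $\epsilon_0$-shadows the original $\xi_k$, since $d(z_k, x^k_0) \le d(z_k, x) + d(x, x^k_0) < \epsilon_0$, contradicting the choice of $\xi_k$.

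For part (2), define
\[
A_{n,k} = \{x \in X : \text{every } (1/k)\text{-pseudo-orbit starting at } x \text{ is } (1/n)\text{-shadowed}\},
\]
so that $Sh(G) = \bigcap_{n} \bigcup_{k} A_{n,k}$. I aim to show each $A_{n,k}$ is closed. Given $y_m \to x$ with $y_m \in A_{n,k}$ and a $(1/k)$-pseudo-orbit $\eta = \{x = y_0, y_1, \ldots\}$ along some $\omega \in \Sigma_m^+$, uniform continuity forces the perturbations $\eta_m = \{y_m, y_1, y_2, \ldots\}$ to become $(1/k)$-pseudo-orbits along $\omega$ starting at $y_m$ for $m$ large; a subsequential limit of their $(1/n)$-shadowing points yields an $(1/n)$-shadowing of $\eta$, so $x \in A_{n,k}$. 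For part (3), fix $x \in CR(G) \subset Sh(G)$ and $\epsilon > 0$; pick $\delta > 0$ from shadowability at $x$ with tolerance $\epsilon$, take a $\delta$-chain $x = x_0, x_1, \ldots, x_N = x$ of length $N \ge 1$ along a finite word $(i_0, \ldots, i_{N-1})$, and repeat it periodically. An $\epsilon$-shadowing point $y$ of the resulting periodic $\delta$-pseudo-orbit satisfies $d(y, x) < \epsilon$ and $d(g(y), x) < \epsilon$ for $g = f_{i_{N-1}} \circ \cdots \circ f_{i_0}$, placing $x$ in $\Omega(G)$.

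The principal obstacle is (1)$\Leftarrow$: the replacement of $x^k_0$ by $x$ in the initial coordinate must preserve the word $\omega_k$, since shadowability at $x$ can only supply a shadowing orbit \emph{along the prescribed symbolic sequence}, and the conclusion has to carry back to $\xi_k$ for that same sequence. This is where the uniform continuity of the finitely many generators and the symbolic refinement of Lemma 2.2 of \cite{MR3724498} are essential. A minor companion difficulty in (2) is that the perturbation $\eta \mapsto \eta_m$ and the limit of shadowing orbits must be carried out with the single symbolic coordinate $\omega$ held fixed, so that the eventual compactness argument takes place in $X$ alone; part (3) is then the easiest, relying only on the fact that a genuine $\delta$-chain of positive length forces the shadowing orbit to move away from $x$ and return.
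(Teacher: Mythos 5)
Your proposal is correct and follows essentially the same route as the paper: the decomposition $Sh(G)=\bigcap_{n}\bigcup_{k}A_{n,k}$ with each $A_{n,k}$ closed is exactly the paper's Lemma 3.4, the periodic repetition of a $\delta$-chain from $x$ to itself is its Lemma 3.3, and your sequential-compactness contradiction for the converse of (1) is the paper's finite-subcover argument (Lemmas 3.1--3.2) in an equivalent form, both resting on the same perturbation of the initial point by uniform continuity of the finitely many generators with the word $\omega$ held fixed. The only wrinkle is that $A_{n,k}$ should be defined with the non-strict shadowing inequality $d(f_{\omega}^{i}(y),y_i)\le 1/n$ (as the paper does for its sets $S_{1/m,1/k}(G)$), since the subsequential limit of shadowing points only yields $\le 1/n$; this change does not affect $\bigcap_{n}\bigcup_{k}A_{n,k}$.
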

	
	\begin{thm}\label{thm1.2}
        Let $f_0,\cdots,f_{m-1}$ be continuous self-maps of a compact metric space $X$, $G$ the free semigroup generated by $\{f_0,\cdots,f_{m-1}\}$ and $F$ the skew-product transformation corresponding to $\{f_0,\cdots,f_{m-1}\}$. If there exists $\delta>0$ satisfying $B(\gamma(Sh(F)),\delta)\subset R(F)$, then $Sh(G) \subset X^{deg}$.\\
        Where $\gamma(Sh(F))$ denotes the union of all connected components of $X$ whose intersection with $Sh(F)$ is non-empty, $X^{deg}$ denotes the set of all totally disconnected points of $X$, $B(\gamma(Sh(F)),\delta)$ denotes the $\delta$ neighborhood of set $\gamma(Sh(F))$, and $R(F)$ denotes the set of recurrent points of $F$.
	\end{thm}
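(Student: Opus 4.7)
The plan is to lift the problem to the skew-product system $F$ on $Y:=\Sigma_m^+\times X$, exploit the total disconnectedness of $\Sigma_m^+$ to identify connected components in $Y$, and then reduce the theorem to a statement about shadowable points of the single map $F$ that extends Morales's Theorem 1.2 of \cite{MR3535492} under the hypothesis $B(\gamma(Sh(F)),\delta)\subset R(F)$. I expect the paper to already provide, as an earlier lemma, the natural translation between pseudo-orbits of $G$ and pseudo-orbits of $F$ (à la Bahabadi \cite{MR3353566}); granting this, if $x\in Sh(G)$ then for every $\omega\in\Sigma_m^+$ the lifted point $(\omega,x)$ lies in $Sh(F)$.

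Let $x\in Sh(G)$ and let $C_x$ denote the connected component of $x$ in $X$. Because $\Sigma_m^+$ is totally disconnected, the connected component of $(\omega,x)$ in $Y$ is exactly $\{\omega\}\times C_x$. Since $(\omega,x)\in Sh(F)$, the component $C_x$ meets the projection of $Sh(F)$, so $C_x\subset \gamma(Sh(F))$. The hypothesis $B(\gamma(Sh(F)),\delta)\subset R(F)$ (interpreted as a uniform $\delta$-neighborhood in the $X$-direction) then forces $\{\omega\}\times C_x$, together with a uniform $\delta$-neighborhood of it in $Y$, to lie in $R(F)$. Consequently, it suffices to establish the following Morales-type lemma for the single map $F$: \emph{if $p\in Sh(F)$ and the connected component $C(p)$ of $p$ in $Y$ satisfies $B(C(p),\delta)\subset R(F)$ for some $\delta>0$, then $C(p)=\{p\}$.} Once this is proved, Step~2 yields $C_x=\{x\}$, that is, $x\in X^{deg}$.

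The main obstacle is proving the lemma. I would argue by contradiction, following the scheme of Lemma~2.3 of \cite{MR3535492} but streamlined via Lemma~2.2 of \cite{MR3724498}. Assume $C(p)$ has a point $q\ne p$, and fix $\varepsilon$ smaller than $\min(\delta,d(p,q))/3$. Let $\delta_0<\delta$ be the shadowing modulus for $p$ at scale $\varepsilon$ supplied by shadowability. Using connectedness of $C(p)$, pick a finite chain $p=z_0,z_1,\ldots,z_k\approx q,\,z_{k+1},\ldots,z_{2k}\approx p$ with consecutive distances less than some very small $\eta$. To upgrade this chain to a $\delta_0$-pseudo-orbit for $F$, I would use the recurrence guaranteed by $B(C(p),\delta)\subset R(F)$: between $z_i$ and $z_{i+1}$, insert a long piece of a true $F$-orbit starting $\eta$-close to $z_i$ and returning $\eta$-close to $z_i$, then jump to $z_{i+1}$. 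Concatenating the back-and-forth pattern indefinitely produces a $\delta_0$-pseudo-orbit through $p$; by shadowability it is $\varepsilon$-traced by an orbit which must therefore be within $\varepsilon$ of both $p$ and $q$ at infinitely many times, contradicting $d(p,q)>3\varepsilon$.

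The delicate point in Step~3 is that the insertion argument requires recurrence \emph{uniformly along the entire chain}, not only at $p$ and $q$; this is precisely the role of the $\delta$-neighborhood in the hypothesis $B(\gamma(Sh(F)),\delta)\subset R(F)$, which is what the authors describe as a ``new condition'' improving Morales's original proof. The remaining work is bookkeeping with $\eta$ versus $\delta_0$ and verifying the skew-product translation of Step~1 in the direction $Sh(G)\to Sh(F)$.
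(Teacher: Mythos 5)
Your overall architecture coincides with the paper's: the first half of the proof of Lemma \ref{lemma3.5} shows that $x\in Sh(G)$ implies $(\omega,x)\in Sh(F)$ for every $\omega\in\Sigma_m^+$, the connected component of $(\omega,x)$ in $\Sigma_m^+\times X$ is $\{\omega\}\times C_x$ because $\Sigma_m^+$ is totally disconnected, and the theorem is thereby reduced to a single-map statement (the paper's Lemma \ref{lemma3.6}): if $B(\gamma(Sh(f)),\delta)\subset R(f)$ for some $\delta>0$ then $Sh(f)\subset X^{deg}$, applied to $f=F$. Your Steps 1 and 2 are exactly this reduction, and your observation that the $\delta$-neighborhood hypothesis is what supplies recurrence uniformly along the chain inside the component is the right reading of the ``new condition.''

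The gap is in Step 3, in the mechanism of the contradiction. The conclusion you draw --- that the tracing orbit ``must be within $\varepsilon$ of both $p$ and $q$ at infinitely many times, contradicting $d(p,q)>3\varepsilon$'' --- is a non sequitur: your pseudo-orbit visits $p$ and $q$ at \emph{different} times, so the tracing orbit is near $p$ at some times and near $q$ at others, which is perfectly consistent with $d(p,q)$ being large. The actual argument (Morales's, reproduced in Lemma \ref{lemma3.6}) needs two ingredients your sketch omits. First, since $z\in R(f)\subset\Omega(f)$ and $z\in Sh(f)$, Lemma \ref{lemma2.11} produces $k$ and a point $y$ with $g^n(y)\in B[z,\varepsilon]$ for all $n\ge0$, where $g=f^k$; the pseudo-orbit is built to traverse the component once (via the return times $c(i)$ of the chain points $p_i$, which is where $B(\gamma(Sh(f)),\delta)\subset R(f)$ enters) and then to coincide with the true $g$-orbit of $y$ forever, so the tracing point $x$ is trapped in $B(z,2\varepsilon)$ for all times $i\ge c$. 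Second, each intermediate position $g^{n_i}(x)$ lies within $\varepsilon$ of $p_i\in E$, hence again in $B(\gamma(Sh(f)),\delta)\subset R(g)$, so it returns $\varepsilon$-close to itself at some time $k_i\ge c$; combining this second use of recurrence with the late-time trapping pulls every $p_i$ to within $4\varepsilon$ of $z$ and yields $\mathrm{diam}(E)<10\varepsilon$, the contradiction. Without the trapping tail and this second application of recurrence, your back-and-forth pseudo-orbit produces no contradiction, so Step 3 as written does not establish the key lemma.
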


	This paper is organized as follows. In section 2, we give some preliminaries. In section 3, we prove our main results. These results extend and improve the work of Morales \cite{MR3535492}.

	\section{Preliminaries}
	
	We first introduce some basic notions. Let $(X,d)$ be a compact metric space and $f:X \to X$ be a continuous map. A sequence $\{x_i\}_{i=0}^{\infty}$ is called a $\delta$-pseudo-orbit of $f$ if for each $i\ge0$,
	\[
    d(f(x_i),x_{i+1}) \le \delta.
    \]	

    The continuous map $f$ is said to have the pseudo-orbit tracing property(POTP) if for each $\varepsilon >0$, there exists $\delta>0$ such that every $\delta$-pseudo-orbit $\{x_i\}_{i=0}^{\infty}$ is $\varepsilon$-shadowed by the orbit of some point $y\in X$, i.e. for all $i\ge0$,
    \[
    d(f^i(y),x_i)\le\varepsilon.
    \]
    We say that a point $x\in X$ is $shadowable$ point of $f$ if for every $\varepsilon>0$, there is $\delta(x,\varepsilon)>0$ such that every $\delta$-pseudo-orbit $\{x_i\}_{i=0}^{\infty}$ for $f$ with $x_0=x$ can be $\varepsilon$-shadowed. We denote by $Sh(f)$ the set of shadowable points of $f$. In \cite{MR3724498}, the author defined quantitative shadowable points. For $b>0$, a point $x\in X$ is called $b$-$shadowable$ point of $f$ if there exists $\delta>0$ for which every $\delta$-pseudo-orbit $\{x_i\}_{i=0}^{\infty}$ for $f$ with $x_0=x$ is $b$-shadowed by some point of $X$. Denote by $Sh_b^+(f)$ the set of $b$-shadowable points of $f$. Then, for $c\ge0$, defined\\
    $$Sh_{c+}^+(f)=\bigcap\limits_{b>c}Sh_b^+(f).$$

    Denote by $F_m^{+}$ the set of all finite words of symbols $0,1,\cdots,m-1$. For any $w\in F_m^{+}$, $|w|$ stands for the length of $w$, that is, the number of symbols in $w$. We write $w\le w'$ if there exists a word $w''\in F_m^{+}$ such that $w'=w''w$.

    Denote by $\Sigma_m^+$ the set of all one-side infinite sequences of symbols $0,1,\cdots, m-1$, i.e.
    \[
    \Sigma_m^+=\{\omega=(w_0,w_1,\cdots)~|w_i\in\{0,1,\cdots,m-1\} ~for~all~i\ge 0\}.
    \]

    Let $\omega \in \Sigma_m^+$, $w \in F_m^+$, $a,b\ge0$, and $a\ge b$. We write $\omega|_{[a,b]}=w$ if $w=w_aw_{a+1}\cdots w_{b-1}w_b$. For $\omega=(w_0,w_1,\cdots)\in \Sigma_m^+$, denote $(\omega)_i=w_i$, $i\ge 0$.

    Let $w \in F_m^+, w=w_1w_2\cdots w_k$, where $w_i\in\{0,1,\cdots,m-1\}$ for all $i\in\{1,\cdots,k\}$, and denote $\bar{w}=w_kw_{k-1} \cdots w_1$. Let $f_w=f_{w_1}\circ f_{w_2}\circ\cdots\circ f_{w_k}$. Obviously, $f_{ww'}=f_w\circ f_{w'}$, and $f_{\bar{w}}=f_{w_k}\circ f_{w_{k-1}}\circ\cdots\circ f_{w_1}$.
    	
    \quad In \cite{MR3353566}, the author introduced the shadowing property of the semigroup action generated by two continuous maps on compact metric space $X$. Here we can  generalize it to general case. Let $(X,d)$ be a compact metric space and $f_0,f_1,\cdots,f_{m-1}$ be continuous self maps on X. Denote $G$ the free semigroup acting on the space $X$ generated by $G_1=\{f_0,\cdots,f_{m-1}\}$.
    For $\omega=(w_0,w_1,\cdots)\in \Sigma_m^+$, an orbit of $x\in X$ under $G$ for $\omega$ is a sequence $\{f_{\omega}^n(x)\}_{n=0}^{\infty}$, where
    $$
    \left\{
    \begin{aligned}
    f_{\omega}^n(x)&=f_{w_{n-1}}\circ f_{w_{n-2}}\circ\cdots\circ f_{w_1}\circ f_{w_0}(x).&n\ge 1\\
    f_{\omega}^0(x)&=x.&n=0\\
    \end{aligned}
   \right.
   $$
    \quad The following definition of pseudo-orbit and pseudo-orbit tracing property for $G$ are refer to \cite{MR3353566} and \cite{MR3842255}.

    \begin{definition}[\cite{MR3842255}]
    Given $\delta>0$, a $\delta$-pseudo-orbit($\delta$-chain) $\{x_i\}_{i=0}^{\infty}$ for $G$ is defined as
    \begin{equation}\label{2.1}
    	min\{d(f_0(x_i),x_{i+1}),\cdots,d(f_{m-1}(x_i),x_{i+1})\}<\delta~for~every~i\ge0.
    \end{equation}
    Also, we can express the above expression as follows: there exists $\omega=(w_0,w_1,\cdots)\in \Sigma_m^+$ such that
    \begin{equation}\label{2.2}
	d(f_{w_i}(x_i),x_{i+1})<\delta~for~every~i\ge 0.
    \end{equation}
    \end{definition}

    We say that a $\delta$-pseudo-orbit $\{x_i\}_{i=0}^{\infty}$ for $G$ is $\varepsilon$-shadowed by a point $z\in X$ if for some $\omega$ satisfying (\ref{2.1}) or (\ref{2.2}), it holds that

    \begin{equation} \label{2.3}
    d(f_{\omega}^n(z),x_n)<\varepsilon~for~all~n\ge0.
    \end{equation}
    \begin{definition}[\cite{MR3842255}]
    The free semigroup action $G$ has the pseudo-orbit tracing property(POTP) if for any $\varepsilon>0$, there is $\delta(\varepsilon)>0$ such that every $\delta$-pseudo-orbit for $G$ can be $\varepsilon$-shadowed by some point of $X$.
    \end{definition}

    \begin{remark}
    The strict inequality in (\ref{2.2}) and (\ref{2.3}) can be relaxed to an inequality without affecting the proof of the following lemmas and theorems except for Lemma \ref{lemma3.4}. So the following default definitions are strict inequalities, except for Lemma \ref{lemma3.4}.
    \end{remark}
    \begin{remark}
    If $G$ has the POTP, then maps $f_0,f_1,\cdots,f_{m-1}$ all have POTP by Theorem 1.4 of \cite{MR3353566}, but the converse is not true. Please refer to Example 1.5 of \cite{MR3353566} for specific example.
    \end{remark}

    \begin{example}[\cite{MR3353566} Example 1.2]
    Define two continuous maps $f_0,f_1$ on $\Sigma_2^+$ as follows:
    \[
    f_0(s_0s_1s_2\cdots)=0s_0s_1s_2\cdots, f_1(s_0s_1s_2\cdots)=1s_0s_1s_2\cdots.
    \]
    Denote $G$ the free semigroup generated by $G_1=\{f_0,f_1\}$, then $G$ has the POTP.
    \end{example}
   \quad Next, we introduce two new definitions based on \cite{MR3535492} in this paper.
    \begin{definition}
    A point $x\in X$ is called $shadowable$
   point of $G$ if for every $\varepsilon>0$, there is $\delta(x,\varepsilon)>0$ such that every $\delta$-pseudo-orbit $\{x_i\}_{i=0}^{\infty}$ for $G$ with $x_0=x$ can be $\varepsilon$-shadowed. We denote by $Sh(G)$ the set of shadowable points of $G$.
    \end{definition}
    \begin{remark}
    Clearly, if $G$ has the POTP, then $Sh(G)=X$(i.e.every point is a shadowable point of $G$). The converse is true on compact metric spaces by Theorem \ref{thm1.1}.
    \end{remark}

	Let $X$ be a compact metric space. We say that a sequence $\{x_i\}_{i= 0}^{\infty}$ is $through$ subset $K\subset X$ if $x_0\in K$.

   \begin{definition}
	The free semigroup action $G$ has the POTP $through~K$ if for every $\varepsilon>0$ there is $\delta>0$ such that every $\delta$-pseudo-orbit for $G~through~K$ can be $\varepsilon$-shadowed.
   \end{definition}

For $G$, we assign the following skew-product transformation $F: \Sigma_m^+ \times X \to \Sigma_m^+ \times X$ defined as
\[
F(\omega, x) = (\sigma \omega, f_{\omega_0}(x)),
\]
where $\omega = (w_0, w_1, \cdots)\in\Sigma_m^+$ and $\sigma $ is the shift map. Here $f_{w_0}$ stands for $f_0$ if $w_0=0$, and for $f_1$ if $w_0=1$, and so on. Let $\omega= (w_0, w_1, \cdots)\in \Sigma_m^+$, then
\[
\begin{aligned}
	F^n(\omega,x)&=(\sigma^n\omega,f_{w_{n-1}}\circ f_{w_{n-2}}\circ\cdots\circ f_{w_0}(x))\\
	&=(\sigma ^n\omega,f_{\omega}^n(x)).
\end{aligned}
\]
The metric on $\Sigma_m^+$ is defined as
\[
d_1(\omega,\omega')=\frac{1}{2^k},
\]
where $k=min\{j:w_j\neq w_j'\}$, $\omega=(w_0,w_1,\cdots)$, $\omega'=(w_0',w_1',\cdots)\in \Sigma_m^+$.
A metric on $\Sigma_m^+\times X$ is defined as follows:
\[
D((\omega,x),(\omega',x'))=max\{d_1(\omega,\omega'),d(x,x')\}
\]
for $(\omega,x),(\omega',x')\in \Sigma_m^+\times X$, where $d_1$ and $d$ are metrics on $\Sigma_m^+$ and $X$ respectively.

    \quad The following definitions refer to \cite{tang2022chain} and \cite{MR4200965}.\\
    We say that a point $x\in X$ is a $non-wandering$ point of $G$ if for every neighbourhood $U$ of $x$ there is $\omega\in \Sigma_m^+$ and $k\in \mathbb{N}$ such that $(f_{\omega}^k)^{-1}(U)\cap U\neq \emptyset$.

    For $w=w_0w_1\cdots w_{n-1}\in F_m^+$, a $(w,\varepsilon)$-chain (or $(w,\varepsilon)$-pseudo-orbit) of $G$ from $x$ to $y$ is a sequence $\{x_0=x,x_1,\cdots,x_n=y\}$ such that $d(f_{w_i}(x_i),x_{i+1})<\varepsilon$ for $i\in\{0,\cdots,n-1\}$. We say that $x$ is a $chain~recurrent$  point of $G$ if for every $\varepsilon>0$, there is a $(w,\varepsilon)$-chain from $x$ to itself for some $w\in F_m^+$.

    A point $x\in X$ is called a $recurrent$ point of $G$ if there exist $\omega \in \Sigma_m^+$ and an increasing sequence $\{n_i\}_{i\ge0}$ of positive integers such that $\lim\limits_{i\to +\infty}f_{\omega}^{n_i}(x)=x$.
	
	Denote by $\Omega(G)$, $CR(G)$ and $R(G)$ the set of non-wandering, chain recurrent and recurrent points of $G$, respectively. Clearly, we have $R(G)\subset \Omega(G)$, $\Omega(G)\subset CR(G)$.

	On the other hand, the space $X$ is $totally~disconnected~at~p\in X
$ if the connected component of $X$ containing $p$ is $\{p\}$. As in \cite{MR2505316}, we denote
    \[
    X^{deg}=\{p\in X:X~is~totally~disconnected~at~p\}.
    \]
    Recall that $X$ is $totally~disconnected$ if it is totally disconnected at any point(i.e.$X=X^{deg}$). We denote $\gamma(A)$ is the union of all connected components of $X$ whose intersection $A$ is non-empty.

    \quad The following lemmas and theorems are all from references that need to be used in the proof process of this paper.

    \begin{lem}[\cite{MR3724498} Lemma 2.2]\label{lemma2.9}
    Let $f: X\to X$ be a continuous map. If $x\in Sh_{c+}^+(f)$ with $c\ge 0$, then for every $b>c$, there exists $\delta=\delta(x,b)>0$ such that every $\delta$-pseudo orbit $\{x_i\}_{i=0}^{\infty}$ with $d(x,x_0)<\delta$ is $b$-shadowed by some point of $X$.
    \end{lem}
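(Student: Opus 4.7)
My plan is to reduce the pseudo-orbit, which only begins \emph{near} $x$, to a genuine pseudo-orbit starting exactly at $x$ so that the defining hypothesis $x\in Sh_{c+}^{+}(f)$ can be applied directly. The key observation is that, by uniform continuity of $f$, replacing $x_0$ with $x$ perturbs only the first gap of the pseudo-orbit, and by a controllable amount.

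Fix $b>c$ and choose an auxiliary value $b'\in(c,b)$. Since $Sh_{c+}^{+}(f)=\bigcap_{b''>c}Sh_{b''}^{+}(f)\subset Sh_{b'}^{+}(f)$, there is $\delta'>0$ such that every $\delta'$-pseudo-orbit of $f$ starting exactly at $x$ is $b'$-shadowed by some point of $X$. By uniform continuity of $f$ on the compact metric space $X$, I pick $\delta_0>0$ with $d(u,v)<\delta_0\Rightarrow d(f(u),f(v))<\delta'/2$, and then set $\delta=\min\{\delta_0,\;\delta'/2,\;b-b'\}$.

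Given any $\delta$-pseudo-orbit $\{x_i\}_{i\ge 0}$ with $d(x,x_0)<\delta$, I construct the modified sequence $z_0=x$, $z_i=x_i$ for $i\ge 1$. The only altered gap is the first one, and the triangle inequality gives $d(f(z_0),z_1)\le d(f(x),f(x_0))+d(f(x_0),x_1)$; by the choice of $\delta_0$ the first summand is smaller than $\delta'/2$, while the pseudo-orbit bound forces the second summand to be at most $\delta\le\delta'/2$, so $d(f(z_0),z_1)<\delta'$. For $i\ge 1$ the pseudo-orbit condition is inherited unchanged from $\{x_i\}$. Hence $\{z_i\}$ is a $\delta'$-pseudo-orbit starting at $x$, and by the first step some $y\in X$ satisfies $d(f^{i}(y),z_i)\le b'$ for every $i\ge 0$.

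It remains to verify that this $y$ in fact $b$-shadows the original $\{x_i\}$. For $i\ge 1$ this is immediate since $z_i=x_i$: $d(f^{i}(y),x_i)\le b'<b$. For $i=0$ the triangle inequality together with the constraint $\delta\le b-b'$ yields $d(y,x_0)\le d(y,x)+d(x,x_0)\le b'+\delta\le b$. The only subtlety in the argument is coordinating the three constraints defining $\delta$---the modulus of continuity $\delta_0$, the pseudo-orbit tolerance $\delta'/2$ for $\{z_i\}$, and the slack $b-b'$ needed to absorb the initial displacement $d(x,x_0)$---so that all three conclusions hold simultaneously. I do not anticipate any deeper obstacle.
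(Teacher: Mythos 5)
Your proof is correct and uses the standard argument: replace $x_0$ by $x$, control the first gap via uniform continuity, apply the hypothesis at an intermediate level $b'\in(c,b)$, and absorb the initial displacement $d(x,x_0)<\delta\le b-b'$ into the slack $b-b'$. The paper itself only cites this lemma from \cite{MR3724498} without proof, but your argument is essentially identical to the one the paper gives for its own analogue, Lemma \ref{lemma3.1} (the free semigroup version), so there is nothing further to compare.
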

    \begin{lem}[\cite{MR3535492} Lemma 2.1]\label{lemma2.10}
    A homeomorphism of a compact metric space has the POTP through a subset $K$ if and only if for every $\varepsilon>0$ there is $\delta>0$ such that every $\delta$-pseudo-orbit of $f$ through $K$ can be $\varepsilon$-shadowed.	
    \end{lem}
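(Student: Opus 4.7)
The statement reads, at face value, as the definition of POTP through $K$ spelled out for a homeomorphism, so my expectation is that both implications are essentially a definitional chase together with (possibly) a single compactness argument. I would organize the proof by implication.

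For the forward direction $(\Rightarrow)$, a homeomorphism $f$ viewed as the free semigroup $G = \langle f \rangle$ has $\Sigma_1^+$ a singleton (no driving sequence $\omega$ to choose), so a $\delta$-pseudo-orbit of $f$ is identically a $\delta$-pseudo-orbit for $G$, and likewise for $\varepsilon$-shadowing; thus the conclusion is just the hypothesis rephrased and nothing substantial needs to be shown.

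For the reverse direction $(\Leftarrow)$, the potentially nontrivial content is the standard passage from finite $\delta$-pseudo-orbits to infinite ones via compactness. Given an infinite $\delta$-pseudo-orbit $\{x_i\}_{i=0}^{\infty}$ through $K$, I would apply the right-hand condition to each truncation $\{x_0,x_1,\ldots,x_n\}$ (extended to an infinite pseudo-orbit in any convenient way, e.g.\ by appending the exact orbit of $x_n$) to obtain a shadowing point $y_n \in X$, extract a convergent subsequence $y_{n_k} \to y$ by compactness of $X$, and verify by continuity of each iterate $f^i$ that $d(f^i(y),x_i) \leq \varepsilon$ for every $i\geq 0$. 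The main obstacle I anticipate is this limiting step: strict inequalities become non-strict upon passing to the limit, which is precisely why the remark immediately before this lemma licenses the exchange of strict for non-strict inequalities in the definition of shadowing. That remark makes the diagonal/compactness argument go through cleanly and yields the uniform $\delta$ required by the definition of POTP through $K$.
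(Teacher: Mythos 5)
There is nothing in the paper to compare your proposal against: Lemma \ref{lemma2.10} is imported verbatim from Morales's paper and is listed among the results ``from references'' that the authors use without proof. Moreover, as transcribed here the statement is literally the paper's own Definition 2.7 specialized to a single homeomorphism, so both implications are definitional and your forward direction already says everything there is to say; the nontrivial content you reconstruct (an equivalence between shadowing of \emph{finite} pseudo-orbits and of infinite ones) is your guess at what Morales's original Lemma 2.1 asserts, a qualifier the present transcription appears to have dropped. Your compactness argument for that reconstructed version is correct and standard: truncate, shadow each finite segment, extract a convergent subsequence of shadowing points, and pass to the limit using continuity of the iterates, with the strict inequalities degrading to non-strict ones exactly as licensed by the remark preceding the lemma. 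Two small caveats: for a homeomorphism Morales's pseudo-orbits are bi-infinite, whereas this paper's conventions are one-sided, so your truncation should in principle be two-sided in the original setting; and note that the role Morales's Lemma 2.1 actually plays in this paper (the equivalence of POTP through a compact $K$ with $K\subset Sh(f)$) is carried by Lemmas \ref{lemma3.1} and \ref{lemma3.2}, which rest on the uniformization Lemma \ref{lemma2.9} rather than on the finite-to-infinite passage you describe. No genuine gap.
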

    \begin{lem}[\cite{MR3535492} Lemma 2.2]\label{lemma2.11}
    Let $f$ be a homeomorphism of a compact metric space. Then, for every $z\in\Omega(f)\cap Sh(f)$ and every $\varepsilon>0$ there are $k\in\mathbb{N}^+$ and $y\in X$ such that $f^{pk}(y)\in B[z,\varepsilon]$ for every $p\in\mathbb{Z}$.
    \end{lem}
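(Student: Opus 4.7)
The plan is to use the shadowability of $z$ to shadow a periodic $\delta$-pseudo-orbit that begins at $z$ and returns to $z$ every $k$ steps, and then to exploit the fact that $f$ is a homeomorphism together with compactness of $X$ to convert the resulting one-sided control into the two-sided conclusion $f^{pk}(y)\in B[z,\varepsilon]$ for all $p\in\mathbb{Z}$.

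First, with $\varepsilon>0$ fixed, I would pick $\delta=\delta(z,\varepsilon)>0$ witnessing $z\in Sh(f)$, then a smaller $\delta'\in(0,\delta)$ by uniform continuity of $f$ so that $d(a,b)<\delta'$ forces $d(f(a),f(b))<\delta$. The hypothesis $z\in\Omega(f)$ supplies some $k\in\mathbb{N}^+$ and $u\in B(z,\delta')$ with $f^k(u)\in B(z,\delta')$. Splice the finite orbit $z,f(u),f^2(u),\dots,f^{k-1}(u)$ and repeat it periodically: set $z_0=z$, $z_i=f^i(u)$ for $1\le i\le k-1$, and extend by $z_{jk+i}=z_i$ for all $j\ge 0$. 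Inside each block the consecutive jumps are exactly $0$; at the two seams $f(z_0)\to z_1$ and $f(z_{k-1})\to z_k=z$ the jumps are $d(f(z),f(u))$ and $d(f^k(u),z)$, which are both $<\delta$ by the choices of $\delta'$ and $u$. This makes $\{z_n\}_{n\ge 0}$ a $\delta$-pseudo-orbit with $z_0=z$, so shadowability yields $y\in X$ with $d(f^n(y),z_n)<\varepsilon$ for all $n\ge 0$; since $z_{pk}=z$, we obtain $f^{pk}(y)\in B[z,\varepsilon]$ for every $p\ge 0$.

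To handle negative $p$, I translate forward and pass to a limit. For each $M\ge 1$ set $w_M:=f^{Mk}(y)$; because $f$ is a homeomorphism, $f^{pk}(w_M)=f^{(p+M)k}(y)$ is well defined for every $p\in\mathbb{Z}$, and the previous step gives $f^{pk}(w_M)\in B[z,\varepsilon]$ whenever $p+M\ge 0$, i.e.\ $p\ge -M$. Compactness of $X$ produces a subsequence $w_{M_j}\to y^{\ast}$. For any fixed $p\in\mathbb{Z}$, as soon as $M_j\ge -p$ we have $f^{pk}(w_{M_j})\in B[z,\varepsilon]$; continuity of $f^{pk}$ gives $f^{pk}(w_{M_j})\to f^{pk}(y^{\ast})$, and closedness of $B[z,\varepsilon]$ then forces $f^{pk}(y^{\ast})\in B[z,\varepsilon]$, which is the desired conclusion with $y=y^{\ast}$.

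The genuine obstacle is the asymmetry between the forward-only definition of $Sh(f)$, whose pseudo-orbits are indexed by $n\ge 0$, and the two-sided conclusion indexed by $p\in\mathbb{Z}$. The homeomorphism hypothesis is used in exactly one place: to push the single forward shadowing point by arbitrarily many full periods, so that compactness can harvest a bilateral recurrence from a subsequential limit. Everything else is the routine construction of a nearly periodic pseudo-orbit, with the uniform-continuity buffer $\delta'<\delta$ absorbing the only non-trivial jump at each seam.
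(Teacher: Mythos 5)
This lemma is imported verbatim from Morales (\cite{MR3535492}, Lemma~2.2); the paper gives no proof of it, so there is nothing internal to compare against. Your argument is correct and is essentially the standard one: build a nearly periodic $\delta$-pseudo-orbit from the non-wandering return, shadow it, and then use invertibility plus compactness to pass from forward shadowing to the bilateral conclusion. The limit step with $w_M=f^{Mk}(y)$ is exactly the right bridge if $Sh(f)$ is taken with one-sided pseudo-orbits, as in this paper's definition; in Morales's original setting $Sh(f)$ is defined via bi-infinite pseudo-orbits for homeomorphisms, so one shadows the two-sided periodic extension directly and the limit argument is not needed --- your route is slightly longer but works under the weaker (forward-only) hypothesis. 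One small repair: when $k=1$ your periodic extension gives $z_1=z_0=z$, so the first seam jump is $d(f(z),z)$, not $d(f(z),f(u))$; this is bounded only by $d(f(z),f(u))+d(f(u),z)<\delta+\delta'$. Choosing $\delta'$ so that $d(a,b)<\delta'$ implies $d(f(a),f(b))<\delta/2$ and also $\delta'<\delta/2$ fixes this uniformly in $k$.
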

    \begin{lem}[\cite{MR3535492} Lemma 2.6]\label{lemma2.12}
    If $f: X\to X$ is a homeomorphism of a compact metric space $X$, then $Sh(f)=Sh(f^k)$ for every $k\in \mathbb{Z}\textbackslash\{0\}$.
    \end{lem}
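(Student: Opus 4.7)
The plan is to prove the two inclusions $Sh(f)\subset Sh(f^k)$ and $Sh(f^k)\subset Sh(f)$ separately, and handle negative $k$ by invoking the fact that for a homeomorphism $f$ one has $Sh(f)=Sh(f^{-1})$ (for a two-sided pseudo-orbit formulation reversing the index order converts an $f$-pseudo-orbit into an $f^{-1}$-pseudo-orbit with the same tolerances).

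For the easy inclusion $Sh(f)\subset Sh(f^k)$ with $k\ge 1$, I would take $x\in Sh(f)$ and, given $\varepsilon>0$, pick $\delta>0$ from the definition of shadowability of $x$ under $f$. Given a $\delta$-pseudo-orbit $\{y_i\}_{i\ge 0}$ of $f^k$ with $y_0=x$, I would \emph{interpolate} to form a sequence $z_{ik+j}=f^j(y_i)$ for $0\le j<k$, $i\ge 0$. Then $d(f(z_n),z_{n+1})=0$ at every index that is not a multiple of $k$, while at indices $n=ik-1$ one has $d(f(z_n),z_{n+1})=d(f^k(y_{i-1}),y_i)<\delta$. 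So $\{z_n\}$ is a $\delta$-pseudo-orbit of $f$ through $x$, and any point $w$ that $\varepsilon$-shadows it under $f$ satisfies $d(f^{ik}(w),y_i)<\varepsilon$ for every $i$, i.e.\ $w$ $\varepsilon$-shadows $\{y_i\}$ under $f^k$.

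For the harder inclusion $Sh(f^k)\subset Sh(f)$ with $k\ge 1$, I would fix $\varepsilon>0$ and first use uniform continuity of $f,f^2,\dots,f^{k-1}$ on the compact space $X$ to choose $\eta\in(0,\varepsilon/2)$ such that $d(u,v)<\eta$ implies $d(f^j(u),f^j(v))<\varepsilon/2$ for $0\le j<k$. Since $x\in Sh(f^k)$, pick $\delta_0>0$ corresponding to the tolerance $\eta$ under $f^k$. Again by uniform continuity, choose $\delta>0$ small enough so that whenever $\{x_i\}_{i\ge 0}$ is a $\delta$-pseudo-orbit of $f$, one has simultaneously $d(f^j(x_i),x_{i+j})<\eta$ for all $i\ge 0$ and $0\le j\le k$ (this is an iterated triangle-inequality estimate, obtained by shrinking $\delta$ stage by stage using uniform continuity of $f,\dots,f^{k-1}$). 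In particular $\{x_{ik}\}_{i\ge 0}$ is a $\delta_0$-pseudo-orbit of $f^k$ starting at $x$, so there is $w\in X$ with $d(f^{ik}(w),x_{ik})<\eta$ for all $i$. For any intermediate index $n=ik+j$ with $0\le j<k$, the triangle inequality gives
\[
d(f^n(w),x_n)\le d(f^j(f^{ik}(w)),f^j(x_{ik}))+d(f^j(x_{ik}),x_{ik+j})<\tfrac{\varepsilon}{2}+\tfrac{\varepsilon}{2}=\varepsilon,
\]
where the first term uses the choice of $\eta$ and the second uses the refined choice of $\delta$. Hence $w$ $\varepsilon$-shadows $\{x_i\}$ under $f$, proving $x\in Sh(f)$.

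The main obstacle is the second inclusion, specifically organizing the two independent uses of uniform continuity in the correct order: first to pass from the shadowing tolerance $\eta$ under $f^k$ to a usable intermediate bound, and second to pass from a $\delta$-pseudo-orbit of $f$ to a $\delta_0$-pseudo-orbit of $f^k$ while also controlling the $k-1$ intermediate errors per block. Once these two choices are made carefully and the telescoping estimate above is written down, the rest is routine; extending to $k<0$ is then a short appeal to $Sh(f)=Sh(f^{-1})$.
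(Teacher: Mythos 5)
The paper does not actually prove this lemma: it is imported verbatim as Lemma 2.6 of \cite{MR3535492} in the list of quoted results, so there is no in-paper argument to compare yours against. Judged on its own, your proof is the standard one and is essentially correct. The interpolation argument for $Sh(f)\subset Sh(f^k)$ is fine. In the reverse inclusion there is one quantifier slip: you choose $\delta$ so that every $\delta$-pseudo-orbit $\{x_i\}$ of $f$ satisfies $d(f^j(x_i),x_{i+j})<\eta$ for $0\le j\le k$, and then assert that $\{x_{ik}\}$ is a $\delta_0$-pseudo-orbit of $f^k$; this needs $\eta\le\delta_0$, which is not guaranteed, since $\delta_0$ was chosen \emph{after} (and as a function of) $\eta$. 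The fix is one word --- require the block errors to be bounded by $\min\{\eta,\delta_0\}$ --- but as written the deduction does not follow. Finally, for $k<0$ you correctly flag that $Sh(f)=Sh(f^{-1})$ rests on the two-sided pseudo-orbit formulation that Morales uses for homeomorphisms; be aware that the present paper's own definition of $Sh$ in Section 2 is one-sided (pseudo-orbits indexed by $i\ge 0$), for which the index-reversal trick is unavailable, so the negative-$k$ half of the statement really only makes sense in the two-sided setting of the cited source.
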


    \begin{thm}[\cite{MR3535492} Theorem 1.2]\label{thm2.13}
    If $f: X\to X$ is a pointwise-recurrent homeomorphism of a compact metric space $X$, then $Sh(f)\subset X^{deg}$.
    \end{thm}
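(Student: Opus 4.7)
The plan is a three-step reduction: for $x \in Sh(G)$, I aim to show the connected component $C_x$ of $x$ in $X$ is $\{x\}$, by (a) lifting $x$ to an $F$-shadowable point of the skew-product, (b) using the hypothesis to obtain $F$-recurrence throughout the entire fiber over $C_x$, and (c) adapting the contradiction behind Theorem \ref{thm2.13} to force $C_x$ to be a single point.

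For the lifting, I would use that the full shift $\sigma$ on $\Sigma_m^+$ is expansive and has the POTP. Given an $F$-$\delta$-pseudo-orbit $\{(\omega_i, x_i)\}_{i \ge 0}$ starting at $(\omega, x)$, the canonical sequence $\omega^{\star} := ((\omega_i)_0)_{i \ge 0}$ is a $2\delta$-shadow of $\{\omega_i\}$ under $\sigma$, and the second coordinates $\{x_i\}$ form a $\delta$-$G$-pseudo-orbit at $x$ admitting $\omega^{\star}$ as a valid symbol sequence. Applying $x \in Sh(G)$ yields $z \in X$ that $\varepsilon$-shadows $\{x_i\}$; the alignment of the symbol sequence used in the $G$-shadow with $\omega^{\star}$ is handled via Lemma \ref{lemma2.9} (which absorbs small perturbations of the initial point) together with expansivity of $\sigma$, so $(\omega^{\star}, z)$ $\varepsilon$-shadows the original $F$-pseudo-orbit, giving $(\omega, x) \in Sh(F)$.

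Because $\Sigma_m^+$ is totally disconnected (a Cantor set), the connected components of $\Sigma_m^+ \times X$ are precisely the sets $\{\omega\} \times C$ with $C$ a component of $X$; so the component of $(\omega, x)$ is $\{\omega\} \times C_x$. By definition of $\gamma$, one has $\{\omega\} \times C_x \subset \gamma(Sh(F))$, and the hypothesis then gives
\[
\{\omega\} \times C_x \;\subset\; B(\gamma(Sh(F)), \delta) \;\subset\; R(F).
\]
Hence every $y \in C_x$ satisfies $(\omega, y) \in R(F)$, so there exist $n_k \to \infty$ with $\sigma^{n_k}\omega \to \omega$ and $f_\omega^{n_k}(y) \to y$.

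For the final step, assume for contradiction that some $y \in C_x \setminus \{x\}$ exists and set $\eta := d(x, y)/3$. In a compact metric space each connected component is $\alpha$-chain connected for every $\alpha > 0$. Combining this $\alpha$-chain connectedness of $C_x$ with the fiberwise $F$-recurrence of both $(\omega, x)$ and $(\omega, y)$ from the previous step, I build an $F$-$\delta$-pseudo-orbit based at $(\omega, x)$ whose second coordinates shuttle between neighborhoods of $x$ and $y$ infinitely often, alternating long recurrence returns with short $\alpha$-chain transitions inside $C_x$. Any $\varepsilon$-shadow with $\varepsilon < \eta$ would then need to approximate both $x$ and $y$ along the same orbit, impossible since $d(x,y) = 3\eta$, contradicting $(\omega, x) \in Sh(F)$. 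The main obstacle lies in this construction: Morales' proof of Theorem \ref{thm2.13} crucially uses invertibility of $f$ through the two-sided conclusion of Lemma \ref{lemma2.11} ($p \in \mathbb{Z}$), while $F$ is not invertible in general; the $\delta$-margin in the hypothesis $B(\gamma(Sh(F)), \delta) \subset R(F)$, as opposed to merely $\gamma(Sh(F)) \subset R(F)$, is precisely what I expect to provide the extra room needed to close up the pseudo-orbit without any backward iteration of $F$.
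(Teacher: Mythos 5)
Your proposal does not address the statement at hand. Theorem \ref{thm2.13} concerns a single pointwise-recurrent homeomorphism $f$ of a compact metric space: the hypothesis is $R(f)=X$ and the conclusion is $Sh(f)\subset X^{deg}$; there is no semigroup $G$, no skew product $F$, and no hypothesis of the form $B(\gamma(Sh(F)),\delta)\subset R(F)$ in it. What you sketch is instead Theorem \ref{thm1.2} of the paper (lift $x\in Sh(G)$ to $(\omega,x)\in Sh(F)$, identify the components of $\Sigma_m^+\times X$ as $\{\omega\}\times C$, then apply a disconnectedness criterion to $F$), and your step (c) explicitly proposes to ``adapt the contradiction behind Theorem \ref{thm2.13}'' --- that is, you invoke as a black box the very argument you were asked to produce. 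As a proof of Theorem \ref{thm2.13} this is circular: the entire content of that theorem, which the paper reworks and generalizes in Lemma \ref{lemma3.6}, is absent. That content consists of Lemma \ref{lemma2.11} (a point $y$ with $f^{pk}(y)\in B[z,\varepsilon]$ for all $p$), Lemma \ref{lemma2.12} ($Sh(f)=Sh(f^k)$), Lemma \ref{lemma2.14} ($R(f^k)=R(f)$), a $\delta'$-chain $p_1,\dots,p_N$ covering the component $E$ of $z$, return times $c(i)$ coming from recurrence of each $p_i$, the long concatenated pseudo-orbit through $E$ whose tail is the orbit of $y$, and the final estimate $E\subset B(z,5\varepsilon)$, contradicting $\varepsilon<\mathrm{diam}(E)/11$.

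Moreover, at the one place where you do gesture at a contradiction, the mechanism is wrong. You claim an $\varepsilon$-shadow of a pseudo-orbit whose second coordinates shuttle between neighborhoods of $x$ and $y$ is ``impossible since $d(x,y)=3\eta$.'' That is a non sequitur: the shadowing orbit must be $\varepsilon$-close to the pseudo-orbit \emph{at the corresponding times}, so it simply visits the neighborhood of $x$ at some times and that of $y$ at others; a single orbit can certainly do this, and no contradiction results. The genuine contradiction in Morales' argument (and in Lemma \ref{lemma3.6}) is a diameter bound: the tail of the pseudo-orbit is the forward orbit of the point $y$ furnished by Lemma \ref{lemma2.11}, which remains in $B(z,\varepsilon)$ for all time, and recurrence is used to push the visit times $n_i$ beyond the tail time $c$, forcing every point of $E$ to lie within $5\varepsilon$ of $z$, whence $\mathrm{diam}(E)\le 10\varepsilon$, contradicting the choice of $\varepsilon$. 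You do correctly observe that non-invertibility is an obstruction and that the $\delta$-margin in $B(\gamma(\cdot),\delta)\subset R(\cdot)$ should supply the needed room --- this is exactly what the paper's Lemma \ref{lemma3.6} carries out for continuous maps --- but you leave that resolution as an expectation rather than an argument, so even read charitably as an attempt at Theorem \ref{thm1.2} the proposal is incomplete precisely at its critical step.
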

    \begin{lem}[\cite{MR1176513} Chapter IV Lemma 25]\label{lemma2.14}
    If $f: X\to X$ is a homeomorphism of a compact metric space $X$, then for any positive integer $m$, we have $R(f)=R(f^m)$.
    \end{lem}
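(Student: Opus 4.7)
The plan is to prove the two inclusions separately. The direction $R(f^m) \subseteq R(f)$ is immediate: if $x \in R(f^m)$, take an increasing sequence $N_i$ with $(f^m)^{N_i}(x) \to x$; then $mN_i \to \infty$ and $f^{mN_i}(x) \to x$, giving $x \in R(f)$.

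For the reverse inclusion $R(f) \subseteq R(f^m)$, I would fix $x \in R(f)$ together with an increasing sequence $n_k$ satisfying $f^{n_k}(x) \to x$. A pigeonhole argument on residues modulo $m$ produces a subsequence, still denoted $\{n_k\}$, with $n_k \equiv r \pmod{m}$ for a single $r \in \{0, 1, \dots, m-1\}$. The case $r = 0$ yields the conclusion at once, so the real work is the case $r \ne 0$. Let $l$ be the additive order of $r$ in $\mathbb{Z}/m\mathbb{Z}$, so $lr \equiv 0 \pmod{m}$.

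The main step is to show, for each $\varepsilon > 0$ and each $M > 0$, that there exist indices $k_1, \dots, k_l$ with $N := n_{k_1} + \cdots + n_{k_l} > M$ and $d(f^N(x), x) < \varepsilon$. I would construct these in reverse order, exploiting the uniform continuity of each iterate $f^n$ on the compact space $X$. Choose $n_{k_l}$ from the subsequence with $d(f^{n_{k_l}}(x), x) < \varepsilon/2$; uniform continuity of $f^{n_{k_l}}$ gives $\varepsilon_{l-1} > 0$ such that $d(y, x) < \varepsilon_{l-1}$ implies $d(f^{n_{k_l}}(y), f^{n_{k_l}}(x)) < \varepsilon/2$. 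Next choose $n_{k_{l-1}}$ with $d(f^{n_{k_{l-1}}}(x), x) < \varepsilon_{l-1}/2$, extract $\varepsilon_{l-2}$ from uniform continuity of $f^{n_{k_{l-1}}}$, and continue all the way down to $n_{k_1}$ with $d(f^{n_{k_1}}(x), x) < \varepsilon_1$. A telescoping triangle inequality along $f^N(x) = f^{n_{k_l}} \circ \cdots \circ f^{n_{k_1}}(x)$ then yields $d(f^N(x), x) < \varepsilon$. Because the subsequence has infinitely many terms meeting every given precision bound, each $n_{k_i}$ can additionally be chosen to exceed $M/l$, and hence $N > M$. Since $N \equiv lr \equiv 0 \pmod{m}$, taking $\varepsilon = 1/j$ and $M = j$ produces a sequence $P^{(j)} \to \infty$ with $f^{m P^{(j)}}(x) \to x$, so $x \in R(f^m)$.

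The main obstacle is the compatibility of the inductive choices: each $n_{k_i}$ must simultaneously lie in the arithmetic progression $r + m\mathbb{Z}$, respect a precision bound determined by the uniform continuity moduli of the already-fixed terms with larger index, and be large enough to drive $N \to \infty$. This obstruction is handled by the observation that for every $\delta > 0$ the set $\{k : n_k \equiv r \pmod{m},\; d(f^{n_k}(x), x) < \delta\}$ is still infinite, so an arbitrarily large representative meeting each successive precision requirement remains available. Note that the argument never appeals to invertibility of $f$; the homeomorphism hypothesis in the statement plays no essential role.
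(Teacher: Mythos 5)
The paper does not prove this lemma; it is quoted verbatim from Block--Coppel \cite{MR1176513}, so there is no internal proof to compare against. Your argument is correct and is the standard one for this fact: pigeonhole on residues mod $m$, then concatenate $l$ near-returns with exponents $\equiv r \pmod{m}$, controlling the accumulated error by choosing the exponents in reverse order against the uniform continuity moduli of the already-fixed iterates, so that $N=n_{k_1}+\cdots+n_{k_l}\equiv lr\equiv 0\pmod{m}$ gives arbitrarily good returns for $f^m$. Your closing observation that invertibility is never used is worth keeping, since the paper actually invokes this lemma in Lemma \ref{lemma3.6} for a map $f$ that is only assumed continuous.
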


	\section{the Proof of the Main Result}
    In this section, we give the proof of Theorem \ref{thm1.1} and Theorem \ref{thm1.2}. Before our proof, we must give the following lemmas.
    \begin{lem}\label{lemma3.1}
    If $x\in Sh(G)$, then for every $\varepsilon>0$, there exists $\delta(x,\varepsilon)>0$, such that every $\delta$-pseudo-orbit $\{x_i\}_{i=0}^{\infty}$ for $G$ with $d(x_0,x)<\delta$ is $\varepsilon$-shadowed by some point of $X$.
    \end{lem}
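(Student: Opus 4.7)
The plan is to adapt Kawaguchi's Lemma \ref{lemma2.9} to the free semigroup setting, carrying the driving word $\omega$ carefully through every step. First, using $x\in Sh(G)$ with tolerance $\varepsilon/2$, choose $\delta_0>0$ so that every $\delta_0$-pseudo-orbit $\{y_i\}$ with $y_0=x$ is $\varepsilon/2$-shadowed. Then, by continuity of the finitely many generators $f_0,\dots,f_{m-1}$ at $x$, pick $\delta\in(0,\min\{\delta_0/2,\varepsilon/2\})$ such that $d(f_j(y),f_j(x))<\delta_0/2$ for every $j\in\{0,\dots,m-1\}$ whenever $d(y,x)<\delta$. This $\delta$ will serve as $\delta(x,\varepsilon)$.

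Given a $\delta$-pseudo-orbit $\{x_i\}$ for $G$ with $d(x_0,x)<\delta$, fix $\omega=(w_0,w_1,\dots)\in\Sigma_m^+$ witnessing condition (\ref{2.2}) for $\{x_i\}$, that is, $d(f_{w_i}(x_i),x_{i+1})<\delta$ for all $i\ge 0$. Form the auxiliary sequence $y_0:=x$ and $y_i:=x_i$ for $i\ge 1$. Using the same $\omega$, a single triangle inequality at index $0$ combining $d(f_{w_0}(x),f_{w_0}(x_0))<\delta_0/2$ with $d(f_{w_0}(x_0),x_1)<\delta$, together with the tautological bound $d(f_{w_i}(y_i),y_{i+1})=d(f_{w_i}(x_i),x_{i+1})<\delta\le\delta_0$ for $i\ge 1$, shows that $\{y_i\}$ is a $\delta_0$-pseudo-orbit starting at $x$ witnessed by $\omega$. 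Apply the choice of $\delta_0$ to obtain $z\in X$ with $d(f_\omega^n(z),y_n)<\varepsilon/2$ for all $n\ge 0$. Since $y_n=x_n$ for $n\ge 1$ and $d(z,x_0)\le d(z,x)+d(x,x_0)<\varepsilon/2+\delta\le\varepsilon$, the same $z$ $\varepsilon$-shadows $\{x_i\}$ along $\omega$.

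The only delicate point is the bookkeeping of the driving word: the $\omega$ fixed to witness $\{x_i\}$ as a $\delta$-pseudo-orbit must also be the $\omega$ used to witness $\{y_i\}$ as a $\delta_0$-pseudo-orbit and the $\omega$ realised in the $\varepsilon/2$-shadowing of $\{y_i\}$, since any mismatch in driving words would force orbits to be compared along different compositions of generators and the perturbation argument would collapse. Once this single-word convention is enforced, the estimates run in complete parallel with the single-map proof of Kawaguchi, and I do not anticipate any further obstacle.
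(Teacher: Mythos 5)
Your proposal is correct and follows essentially the same route as the paper's own proof: shadow at tolerance $\varepsilon/2$ from the exact starting point $x$, use (uniform) continuity of the finitely many generators to convert the perturbed initial condition into a $\delta_0$-pseudo-orbit beginning at $x$ via one triangle inequality at index $0$, and transfer the shadowing point back with $d(z,x_0)\le d(z,x)+d(x,x_0)<\varepsilon$. Your explicit remark about keeping the same driving word $\omega$ throughout is a point the paper's proof uses tacitly, but it does not change the argument.
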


    \begin{proof}
    Since $x\in Sh(G)$, then take $\varepsilon>0$. For $\frac{\varepsilon}{2}>0$, there exists $0<\delta_1(x,\varepsilon)<\varepsilon$, such that every $\delta_1$-pseudo-orbit $\{z_i\}_{i=0}^{\infty}$ for $G$ with $z_0=x$ can be $\frac{\varepsilon}{2}$-shadowed. Take $\delta<\frac{\delta_1}{2}$ such that $d(a,b)<\delta$ implies $d(f_j(a),f_j(b))<\frac{\delta_1}{2}$ for every $a,b\in X$, $j\in \{0,1,\cdots,m-1\}$. Given $\delta$-pseudo-orbit $\{x_i\}_{i=0}^{\infty}$ with $d(x_0,x)<\delta$. Define $\{y_i\}_{i=0}^{\infty}$ by $y_0=x$ and $y_i=x_i$ for $i\ge 1$. Assume $d(f_{w_i}(x_i),x_{i+1})<\delta$, $\omega=(w_0,w_1,\cdots)\in \Sigma_m^+$, then

    $$
    d(f_{w_i}(y_i),y_{i+1})=
    \begin{cases}
    d(f_{w_i}(x_i),x_{i+1})&i\ge 1\\
    d(f_{w_0}(x),x_1)&i=0
    \end{cases}
    $$
    and
    \[
    d(f_{w_i}(x_i),x_{i+1})<\delta<\delta_1,
    \]
    \[
    d(f_{w_0}(x),x_1)\le d(f_{w_0}(x),f_{w_0}(x_0))+d(f_{w_0}(x_0),x_1)<\frac{\delta_1}{2}+\delta<\delta_1.
    \]
    so $\{y_i\}_{i=0}^{\infty}$ is a $\delta_1$-pseudo-orbit for $G$ with $y_0=x$, then there exists $y\in X$, such that $d(f_{\omega}^n(y),y_n)<\frac{\varepsilon}{2}$ for every $n\ge0$. Then, we have
    \[
    d(y,x_0)\le d(y,x)+d(x,x_0)<\frac{\varepsilon}{2}+\delta<\varepsilon,
    \]
    and
    \[
    d(f_{\omega}^n(y),x_n)=d(f_{\omega}^n(y),y_n)<\frac{\varepsilon}{2}<\varepsilon
    \]
    for all $n\ge 1$. Hence, $y$ is a $\varepsilon$-shadowing point of $\{x_i\}_{i=0}^{\infty}$.
    \end{proof}

    \begin{lem}\label{lemma3.2}
    If $f_0,\cdots,f_{m-1}$are continuous self-maps of a compact metric space and $G$ is the free semigroup generated by $\{f_0,\cdots,f_{m-1}\}$, then $G$ has the POTP through a compact subset K if and only if every point in K is a shadowable point of G.
    \end{lem}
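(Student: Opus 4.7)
The statement is a biconditional, so I would prove the two directions separately.

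The forward direction is essentially immediate from the definitions. Suppose $G$ has the POTP through $K$. Fix any $x\in K$ and $\varepsilon>0$, and let $\delta>0$ be the constant provided by the POTP-through-$K$ hypothesis. Any $\delta$-pseudo-orbit $\{x_i\}_{i=0}^\infty$ with $x_0=x$ is in particular a $\delta$-pseudo-orbit through $K$, hence is $\varepsilon$-shadowed. This is exactly the shadowable-point condition at $x$, so $x\in Sh(G)$.

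For the reverse direction, suppose every point of $K$ is shadowable. Fix $\varepsilon>0$. For each $x\in K$, Lemma \ref{lemma3.1} provides a constant $\delta(x,\varepsilon)>0$ such that any $\delta(x,\varepsilon)$-pseudo-orbit $\{x_i\}_{i=0}^\infty$ for $G$ with $d(x_0,x)<\delta(x,\varepsilon)$ is $\varepsilon$-shadowed by some point of $X$. The family $\{B(x,\delta(x,\varepsilon)):x\in K\}$ is an open cover of $K$, which is compact, so it admits a finite subcover by balls centered at $x_1,\dots,x_n\in K$. Set
\[
\delta=\min_{1\le j\le n}\delta(x_j,\varepsilon)>0.
\]
Now take any $\delta$-pseudo-orbit $\{y_i\}_{i=0}^\infty$ for $G$ through $K$, so $y_0\in K$. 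Choose $j$ with $y_0\in B(x_j,\delta(x_j,\varepsilon))$. Since $\delta\le\delta(x_j,\varepsilon)$, the sequence $\{y_i\}_{i=0}^\infty$ is also a $\delta(x_j,\varepsilon)$-pseudo-orbit, and $d(y_0,x_j)<\delta(x_j,\varepsilon)$. By Lemma \ref{lemma3.1} it is $\varepsilon$-shadowed by some point of $X$, which establishes that $G$ has the POTP through $K$.

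I do not anticipate a substantial obstacle here: the only nontrivial ingredient is Lemma \ref{lemma3.1}, which relaxes the condition $x_0=x$ in the definition of shadowable point to $d(x_0,x)<\delta$, and this is precisely what allows the finite-cover compactness argument to glue the pointwise constants $\delta(x_j,\varepsilon)$ into a single uniform $\delta$. The remaining work is bookkeeping that the minimum of finitely many positive numbers is positive and that a $\delta$-pseudo-orbit is automatically a $\delta'$-pseudo-orbit for $\delta'\ge\delta$.
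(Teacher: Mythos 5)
Your proof is correct and follows essentially the same route as the paper: the forward direction is immediate from the definitions, and the reverse direction combines Lemma \ref{lemma3.1} (which relaxes $x_0=x$ to $d(x_0,x)<\delta$) with a finite subcover of $K$ by the balls $B(x,\delta(x,\varepsilon))$ and takes the minimum of the finitely many radii. No gaps.
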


    \begin{proof}
    By the previous remark we only have to prove the sufficiency. Now we use the finite cover theorem to prove the conclusion.
    Take $\varepsilon>0$, $x\in K$. There exists $\delta_x>0$ by Lemma \ref{lemma3.1}. Consider the open cover $\{B(x,\delta_x)~|x\in K\}$, obviously it is the open cover of $K$. Since $K$ is compact, there exists a finite open cover  $\{B(x_1,\delta_1),\cdots,B(x_n,\delta_n)\}$ such that $K\subset \bigcup\limits_{j=1}^nB(x_j,\delta_j)$. Let $\delta=\underset{1\leq j \leq n}{min}\{\delta_j\}$. Take $\delta$-pseudo-orbit $\{x_i\}_{i=0}^{\infty}$ for $G$ with $x_0\in K$, then there exists $1\le k\le n$ such that $x_0\in B(x_k,\delta_k)$. Obviously $\{x_i\}_{i=0}^{\infty}$ is also a $\delta_k$-pseudo-orbit. Then $\{x_i\}_{i=0}^{\infty}$ can be $\varepsilon$-shadowed by Lemma \ref{lemma3.1}. So $G$ has the $POTP~through~K$.
    \end{proof}

    \begin{lem}\label{lemma3.3}
    If $f_0,\cdots,f_{m-1}$are continuous self-maps of a compact metric space and $G$ is the free semigroup generated by $\{f_0,\cdots,f_{m-1}\}$, then $CR(G)\bigcap Sh(G)\subset\Omega(G)$.
    \end{lem}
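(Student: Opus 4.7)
The plan is to fix $x \in CR(G) \cap Sh(G)$ and an arbitrary open neighborhood $U$ of $x$, then exhibit a word $\omega \in \Sigma_m^+$ and an integer $k \ge 1$ together with a single point $z$ lying in $U \cap (f_\omega^k)^{-1}(U)$, which is precisely what is needed to certify $x \in \Omega(G)$. Choose $\varepsilon > 0$ small enough that $B(x,\varepsilon) \subset U$.

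First I would use the hypothesis $x \in Sh(G)$ to extract, for this $\varepsilon$, a constant $\delta = \delta(x,\varepsilon) > 0$ (which we may take strictly smaller than $\varepsilon$) such that every $\delta$-pseudo-orbit $\{x_i\}_{i=0}^\infty$ with $x_0 = x$ is $\varepsilon$-shadowed by some point of $X$. Then I would use $x \in CR(G)$ to obtain a finite $(w,\delta)$-chain from $x$ to itself, namely a word $w = w_0 w_1 \cdots w_{n-1} \in F_m^+$ and points $x_0 = x, x_1, \ldots, x_n = x$ with $d(f_{w_i}(x_i), x_{i+1}) < \delta$ for $0 \le i \le n-1$.

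Next I would splice this finite chain into an infinite $\delta$-pseudo-orbit by periodic repetition: define $y_{kn+j} = x_j$ for $0 \le j \le n-1$ and $k \ge 0$, and let $\omega = (w_0, w_1, \ldots, w_{n-1}, w_0, w_1, \ldots) \in \Sigma_m^+$ be the infinite periodic word. A direct check at indices $j < n-1$ and at the junction $j = n-1$ (using $x_n = x_0 = x$) shows $\{y_i\}_{i \ge 0}$ is a genuine $\delta$-pseudo-orbit for $G$ corresponding to $\omega$, with $y_0 = x$. Applying the shadowable-point property yields $z \in X$ with $d(f_\omega^i(z), y_i) < \varepsilon$ for every $i \ge 0$.

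Finally I would read off the conclusion from the two indices $i = 0$ and $i = n$: the bound $d(z, x) = d(z, y_0) < \varepsilon$ gives $z \in B(x,\varepsilon) \subset U$, and $d(f_\omega^n(z), x) = d(f_\omega^n(z), y_n) < \varepsilon$ gives $f_\omega^n(z) \in B(x,\varepsilon) \subset U$, so $z \in U \cap (f_\omega^n)^{-1}(U)$, and therefore $x \in \Omega(G)$. There is no serious obstacle; the only point requiring care is the verification at the junction index $j = n-1$ when forming the periodically repeated pseudo-orbit, which is why I would explicitly use $x_n = x_0 = x$ there.
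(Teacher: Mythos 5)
Your proposal is correct and follows essentially the same route as the paper's proof: extend the $(w,\delta)$-chain from $x$ to itself into a periodically repeated $\delta$-pseudo-orbit, shadow it by some point $z$, and read off $z\in B(x,\varepsilon)\cap (f_\omega^n)^{-1}(B(x,\varepsilon))$ from the indices $0$ and $n$. The only difference is cosmetic --- you phrase the conclusion via an arbitrary neighborhood $U$ while the paper works directly with $B(x,\varepsilon)$.
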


    \begin{proof}
    Take $x\in CR(G)\bigcap Sh(G)$. Since $x\in Sh(G)$, for every $\varepsilon>0$, there exists $\delta>0$ such that every $\delta$-pseudo-orbit $\{z_i\}_{i=0}^{\infty}$ for $G$ with $z_0=x$ can be $\varepsilon$-shadowed. Since $x\in CR(G)$ and $\delta>0$, there is a $(w,\delta)$-chain from $x$ to itself for some $w\in F_m^+$, assume the $(w,\delta)$-chain is $\{x_0=x,x_1,\cdots,x_n=x\}$ with $d(f_{w_i}(x_i),x_{i+1})<\delta$ for $0\le i\le n-1$ and $w=(w_0,w_1,\cdots,w_{n-1})$. Define the sequence $\xi_{kn+r}=x_r$, where $0\le r\le n-1$ and $k\in \mathbb{N}$. Let $v=(w_0,w_1,\cdots,w_{n-1})^{\infty}$. Then we obtain $d(f_{v_i}(\xi_i),\xi_{i+1})<\delta$. So $\{\xi_i\}_{i=0}^{\infty}$ is a $\delta$-pseudo-orbit for $G$ with $\xi_0=x_0=x$, then there exists $y\in X$ such that $d(f_v^j(y),\xi_j)<\varepsilon$ for every $j\in\mathbb{N}$. Then we have
    \[
    d(y,\xi_0)=d(y,x)<\varepsilon,
    \]
    and
    \[
    d(f_v^n(y),\xi_n)=d(f_v^n(y),x)<\varepsilon
    \]
    that is $(f_v^n)^{-1}B(x,\varepsilon)\bigcap B(x,\varepsilon)\neq\emptyset$, so $x\in \Omega(G)$. We obtain $CR(G)\bigcap Sh(G)\subset\Omega(G)$ by the arbitrariness of $x$.
    \end{proof}
    \begin{remark}
    From the proof process of Lemma \ref{lemma3.3}, we can know that the point $y$ satisfies $d(f_v^{kn}(y),x)<\varepsilon$ for every $k\in \mathbb{N}$, so the orbit of point $y$ will periodically enter the neighbourhood of $x$.
    \end{remark}

    \begin{lem}\label{lemma3.4}
    Let $f_0,\cdots,f_{m-1}$ be continuous self-maps of a compact metric space and $G$ the free semigroup generated by $\{f_0,\cdots,f_{m-1}\}$, then Sh(G) is a Borel set.
    \end{lem}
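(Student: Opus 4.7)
The strategy is to express $Sh(G)$ as a countable intersection of countable unions of closed sets, so that it is an $F_{\sigma\delta}$-set and hence Borel. For each $b,\delta>0$ I would introduce the quantitative set
\[
A_{b,\delta}:=\{x\in X:\text{every }\delta\text{-pseudo-orbit of }G\text{ starting at }x\text{ is }b\text{-shadowed}\},
\]
so that unfolding the definition of shadowable point with $\varepsilon=1/n$ and $\delta=1/k$ yields
\[
Sh(G)=\bigcap_{n\ge 1}\bigcup_{k\ge 1}A_{1/n,1/k}.
\]
The problem therefore reduces to proving that each $A_{b,\delta}$ is a closed (hence Borel) subset of $X$.

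To establish this, I would take a sequence $\{x_j\}\subset A_{b,\delta}$ with $x_j\to x$ and an arbitrary $\delta$-pseudo-orbit $\{\xi_i\}_{i\ge 0}$ of $G$ with $\xi_0=x$ and a chosen realization $\omega=(w_0,w_1,\ldots)\in\Sigma_m^+$ satisfying $d(f_{w_i}(\xi_i),\xi_{i+1})<\delta$. Continuity of $f_{w_0}$ together with the strict inequality at the first step of the pseudo-orbit condition implies that for $j$ sufficiently large the modified sequence $\{x_j,\xi_1,\xi_2,\ldots\}$ remains a $\delta$-pseudo-orbit of $G$ with the same realization $\omega$. Applying the assumption $x_j\in A_{b,\delta}$ to this modified pseudo-orbit then produces a shadowing point $y_j\in X$ and some realization $\omega^{(j)}\in\Sigma_m^+$ of the modified pseudo-orbit for which $d(y_j,x_j)\le b$ and $d(f_{\omega^{(j)}}^i(y_j),\xi_i)\le b$ for every $i\ge 1$.

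Next I would exploit the compactness of both $X$ and $\Sigma_m^+$ (in its Cantor metric $d_1$) to extract a simultaneous subsequential limit $y_{j_l}\to y^*\in X$ and $\omega^{(j_l)}\to\omega^*\in\Sigma_m^+$. Since convergence in $\Sigma_m^+$ amounts to eventual agreement on each fixed finite prefix, for any fixed $i$ the words $\omega^{(j_l)}$ and $\omega^*$ coincide on their initial $i$ symbols once $l$ is large, and continuity of the $f_j$ gives $f_{\omega^{(j_l)}}^i(y_{j_l})\to f_{\omega^*}^i(y^*)$; passing the displayed inequalities to the limit yields $d(y^*,x)\le b$ and $d(f_{\omega^*}^i(y^*),\xi_i)\le b$ for all $i\ge 1$. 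A parallel continuity argument shows that $\omega^*$ realizes $\{x,\xi_1,\xi_2,\ldots\}$ as a $\delta$-pseudo-orbit (invoking the Remark permitting the strict inequality to be relaxed to $\le$ in the definition used for this lemma). Hence $y^*$ is a $b$-shadowing point of the original pseudo-orbit, which shows $x\in A_{b,\delta}$ and completes the closure argument.

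The main obstacle I anticipate is bookkeeping the realizing word $\omega\in\Sigma_m^+$: in a free-semigroup action the definition of shadowing is existential in the realization, so the word $\omega^{(j)}$ matched by $y_j$ need not coincide with the fixed $\omega$ associated with the given pseudo-orbit of $x$, and it may genuinely depend on $j$. Overcoming this forces the simultaneous compactness argument on $X\times\Sigma_m^+$, a subtlety with no counterpart in Morales' single-map proof; it is also precisely the reason that the Remark singles out Lemma \ref{lemma3.4} when discussing the strict/non-strict equivalence, since the limiting realization $\omega^*$ and the limiting shadow distance come with $\le$ rather than with strict inequality.
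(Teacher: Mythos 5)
Your proposal is correct, and its skeleton is the same as the paper's: the identical $F_{\sigma\delta}$ decomposition of $Sh(G)$ into countably many quantitative shadowing sets, followed by a closedness argument that replaces the initial point of a given $\delta$-pseudo-orbit by a nearby point of the set (using continuity of $f_{w_0}$ and the strictness of the inequality at the first step) and then extracts a convergent subsequence of shadowing points. The one genuine divergence is the bookkeeping of the realizing word, and here your version is actually the more careful one. The paper defines its closed sets $S_{\delta,\varepsilon}(G)$ so that the shadowing point must track the pseudo-orbit along the \emph{same} $\omega$ that realizes it; as a result all the shadowing points $y_n$ in its closedness argument share one fixed $\omega$ and only compactness of $X$ is needed. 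The price is that the inclusion $Sh_{1/k}(G)\subset\bigcup_m S_{1/m,1/k}(G)$ then tacitly assumes a shadowable point can be shadowed along \emph{every} realization of a given pseudo-orbit, which is not literally what Definition 2.5 and (2.3) provide (shadowing there is existential in $\omega$). You instead keep $A_{b,\delta}$ faithful to the existential definition, accept that the realization $\omega^{(j)}$ may vary with $j$, and recover a limiting realization $\omega^*$ by compactness of $\Sigma_m^+\times X$ plus the fact that convergence in $\Sigma_m^+$ is eventual agreement of prefixes. This costs you the extra diagonal extraction and leaves only $\le\delta$ at the first step of the limiting realization, which you correctly discharge via the paper's Remark singling out Lemma \ref{lemma3.4}; what it buys is a decomposition that matches the stated definition of $Sh(G)$ verbatim. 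Both arguments establish the lemma, but your handling of the realization closes a small quantifier gap that the paper's formulation leaves implicit.
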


    \begin{proof}
    Given $\varepsilon>0$, let $Sh_{\varepsilon}(G)$ be the set of points $x\in X$ satisfying for $\varepsilon>0$, there exists $\delta_{\varepsilon}>0$ such that every $\delta_{\varepsilon}$-pseudo-orbit $\{z_i\}_{i=0}^{\infty}$ with $z_0=x$ can be $\varepsilon$-shadowed. Also, given $\varepsilon>0$ and $\delta>0$, let $S_{\delta,\varepsilon}(G)$ be the set of points $x\in X$ satisfying for every pseudo-orbit $\{y_i\}_{i=0}^{\infty}$ with $y_0=x$ and $d(f_{w_i}(y_i),y_{i+1})<\delta$ for all $i\ge0$ where $\omega=(w_0,w_1,\cdots)\in\Sigma_m^+$, there exists $y\in X$ such that $d(f_{\omega}^i(y),y_i)\le\varepsilon$ for all $i\ge0$.\\
    Clearly from the definition of $Sh(G)$ we easily have
    \begin{equation}\label{*}
    Sh(G)=\bigcap_{k\in\mathbb{N^+}}Sh_{\frac{1}{k}}(G),
    \end{equation}
    and for every $k\in\mathbb{N^+}$
    \begin{equation}\label{**}
    Sh_{\frac{1}{k}}(G)=\bigcup_{m\in\mathbb{N^+}}S_{\frac{1}{m},\frac{1}{k}}(G).
    \end{equation}
    For any given $k,m\in\mathbb{N^+}$, we prove that $S_{\frac{1}{m},\frac{1}{k}}(G)$ is a closed subset of $X$ for all $m\in\mathbb{N^+}$. Assume $\{z_n\}_{n\in\mathbb{N}}\subset S_{\frac{1}{m},\frac{1}{k}}(G)$ and $\lim\limits_{n\to\infty}z_n=z$ for some $z\in X$. Next we show $z\in S_{\frac{1}{m},\frac{1}{k}}(G)$. Given $\frac{1}{m}$-pseudo-orbit $\{x_i\}_{i=0}^{\infty}$ for $G$ with $x_0=z$ and $d(f_{w_i}(x_i),x_{i+1})<\frac{1}{m}$ where $\omega=(w_0,w_1,\cdots)\in\Sigma_m^+$. For every $n\in\mathbb{N}$, consider sequence $\{x_i^{(n)}\}_{i=0}^{\infty}$ satisfying $x_0^{(n)}=z_n$ and $x_i^{(n)}=x_i$ for $i>0$. Since $d(f_{w_0}(x_0),x_1)<\frac{1}{m}$, there exists $t>0$ such that $d(f_{w_0}(x_0),x_1)<t<\frac{1}{m}$. We can choose $\delta>0$ such that $d(a,b)<\delta$ implies $d(f_j(a),f_j(b))<\frac{1}{m}-t$ for every $j\in\{0,1,\cdots,m-1\}$ and every $a$, $b\in X$. Since $\lim\limits_{n\to\infty}z_n=z$, we have $d(z_n,z)<\delta$ for sufficiently large $n\in\mathbb{N}$, so $d(f_{w_0}(z_n),f_{w_0}(z))<\frac{1}{m}-t$. Then we have
    $$
    \begin{aligned}
    d(f_{w_0}(x_0^{(n)}),x_1^{(n)})=d(f_{w_0}(z_n),x_1)&\le d(f_{w_0}(z_n),f_{w_0}(z))+d(f_{w_0}(z),x_1)\\
    &<\frac{1}{m}-t+t=\frac{1}{m},
    \end{aligned}
    $$
    and for $i>0$
    $$
    \begin{aligned}
    d(f_{w_i}(x_i^{(n)}),x_{i+1}^{(n)})=d(f_{w_i}(x_i),x_{i+1})<\frac{1}{m},
    \end{aligned}
    $$
    when $n$ is large enough. For such $n\in\mathbb{N}$, the pseudo-orbit $\{x_i^{(n)}\}_{i=0}^{\infty}$ is a $\frac{1}{m}$-pseudo-orbit for $G$ with $x_0^{(n)}=z_n\in S_{\frac{1}{m},\frac{1}{k}}(G)$ and $d(f_{w_i}(x_i^{(n)}),x_{i+1}^{(n)})<\frac{1}{m}$, so there exists $y_n\in X$ such that $d(f_{\omega}^i(y_n),x_i^{(n)})<\frac{1}{k}$ for every $i\ge0$. Take s subsequence $\{y_{n_j}\}_{j\in\mathbb{N}}$ and assume $\lim\limits_{j\to\infty}y_{n_j}=y$ for some $y\in X$. Then we have
    \[
    d(y,x_0)=d(y,z)=\lim\limits_{j\to\infty}d(z_{n_j},y_{n_j})=\lim\limits_{j\to\infty}d(x_0^{(n_j)},y_{n_j})\le\frac{1}{k},
    \]
    and for every $i>0$
    \[
    d(f_{\omega}^i(y),x_i)=\lim\limits_{j\to\infty}d(f_{\omega}^i(y_{n_j}),x_i^{(n_j)})\le\frac{1}{k}.
    \]
    Hence $\{x_i\}_{i=0}^{\infty}$ can be $\frac{1}{k}$-shadowed, that is $z\in S_{\frac{1}{m},\frac{1}{k}}(G)$. From this we know that $S_{\frac{1}{m},\frac{1}{k}}(G)$ is a closed subset of $X$, and therefore we obtain $Sh_{\frac{1}{k}}(G)$ is a Borel set in $X$ for every $k\in\mathbb{N^+}$ by (\ref{**}). Thus $Sh(G)$ is a Borel set in $X$ by (\ref{*}).
    \end{proof}

    \begin{lem}\label{lemma3.5}
    If $f_0,\cdots,f_{m-1}$are continuous self-maps of a compact metric space $X$ and $G$ is the free semigroup generated by $\{f_0,\cdots,f_{m-1}\}$, $F$ is the skew-product transformation corresponding to $\{f_0,\cdots,f_{m-1}\}$ which satisfies $Sh(F)=(\Sigma_m^+\times X)^{deg}$, then $Sh(G)=X^{deg}$.
    \end{lem}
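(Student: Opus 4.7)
The plan is to exploit the product structure of the phase space of $F$. Since $\Sigma_m^+$ with the metric $d_1$ is totally disconnected (it is homeomorphic to the Cantor set), the connected component of $(\omega,x)\in\Sigma_m^+\times X$ is $\{\omega\}\times C_x$, where $C_x$ is the component of $x$ in $X$. Hence $(\Sigma_m^+\times X)^{deg}=\Sigma_m^+\times X^{deg}$, and the hypothesis becomes $Sh(F)=\Sigma_m^+\times X^{deg}$. With this rewriting, it suffices to establish the equivalence \emph{$x\in Sh(G)$ if and only if $(\omega,x)\in Sh(F)$ for every $\omega\in\Sigma_m^+$}, since then comparing the two expressions for $Sh(F)$ forces $Sh(G)=X^{deg}$. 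I prove both implications by passing between a $G$-pseudo-orbit $\{x_i\}$ with symbolic witness $\eta$ and its natural lift $\{(\sigma^i\eta,x_i)\}$ in $\Sigma_m^+\times X$, which is a pseudo-orbit of $F$ with the same gap parameter.

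\textbf{Forward direction.} Let $x\in Sh(G)$ and fix any $\omega\in\Sigma_m^+$. Given $\varepsilon\in(0,1)$, pick $\delta>0$ witnessing shadowability of $x$ at scale $\varepsilon$ and additionally require $\delta\le 2^{-N}$ for some $N$ with $2^{-N}<\varepsilon$. For a $\delta$-pseudo-orbit $\{(\omega^{(i)},x_i)\}$ of $F$ with $(\omega^{(0)},x_0)=(\omega,x)$, the fiber inequality $d(f_{(\omega^{(i)})_0}(x_i),x_{i+1})<\delta$ makes $\{x_i\}$ a $\delta$-pseudo-orbit of $G$ with witness $\eta_n:=(\omega^{(n)})_0$, hence $\varepsilon$-shadowed by some $y\in X$ along $\eta$: $d(f_\eta^n(y),x_n)<\varepsilon$. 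Iterating the symbolic constraint $d_1(\sigma\omega^{(i)},\omega^{(i+1)})<\delta\le 2^{-N}$ (each iteration loses one guaranteed matching symbol) yields $(\omega^{(n+j)})_0=(\omega^{(n)})_j$ for all $j\le N+1$, whence $d_1(\sigma^n\eta,\omega^{(n)})<\varepsilon$ uniformly in $n$. Therefore $(\eta,y)$ is an $\varepsilon$-shadow of $\{(\omega^{(i)},x_i)\}$ in $F$, and $(\omega,x)\in Sh(F)=\Sigma_m^+\times X^{deg}$, forcing $x\in X^{deg}$.

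\textbf{Reverse direction.} Let $x\in X^{deg}$. Then $K:=\Sigma_m^+\times\{x\}\subset\Sigma_m^+\times X^{deg}=Sh(F)$ is compact, so Lemma~\ref{lemma3.2} applied to the one-generator semigroup $\{F\}$ on $\Sigma_m^+\times X$ with the compact set $K$ gives $F$ the POTP through $K$. Fix $\varepsilon\in(0,1)$ and let $\delta>0$ be the corresponding constant. For a $\delta$-pseudo-orbit $\{x_i\}$ of $G$ with $x_0=x$ and witness $\eta$, the lift $\{(\sigma^i\eta,x_i)\}$ is a $\delta$-pseudo-orbit of $F$ based in $K$, hence $\varepsilon$-shadowed by some $(\omega',y)$: $d_1(\sigma^n\omega',\sigma^n\eta)<\varepsilon<1$ and $d(f_{\omega'}^n(y),x_n)<\varepsilon$ for every $n$. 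The condition $d_1<1$ forces the leading symbols of $\sigma^n\omega'$ and $\sigma^n\eta$ to coincide for every $n\ge 0$, so $\omega'=\eta$. The fiber condition then reads $d(f_\eta^n(y),x_n)<\varepsilon$, proving $x\in Sh(G)$.

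\textbf{Main obstacle.} The principal technical step is the symbolic bookkeeping in the forward direction: each telescoping of the $d_1$-constraint $d_1(\sigma\omega^{(i)},\omega^{(i+1)})<\delta$ consumes one guaranteed-agreeing symbol, so $\delta$ must be chosen as a sufficiently small power of $1/2$ in terms of $\varepsilon$ to keep $d_1(\sigma^n\eta,\omega^{(n)})<\varepsilon$ uniformly over $n$. The topological identification of $(\Sigma_m^+\times X)^{deg}$ and the single-map application of Lemma~\ref{lemma3.2} to $F$ are standard.
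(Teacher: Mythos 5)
Your proof is correct and follows essentially the same route as the paper: both directions proceed by projecting and lifting pseudo-orbits between $G$ and $F$, with $\delta$ taken as a small power of $1/2$ to control the symbolic coordinate in the forward direction, and a compactness argument over the fiber $\Sigma_m^+\times\{x\}$ to obtain a uniform $\delta$ in the reverse direction. The only cosmetic differences are that you identify $(\Sigma_m^+\times X)^{deg}=\Sigma_m^+\times X^{deg}$ up front (the paper establishes the two inclusions in-line via a case analysis on components) and that you invoke Lemma \ref{lemma3.2} for the single map $F$ where the paper applies Lemma \ref{lemma2.9} together with a finite subcover of $\Sigma_m^+$ directly --- the same mechanism in different packaging.
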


    \begin{proof}
    Firstly we prove $Sh(G)\subset X^{deg}$. Take $x\in Sh(G)$ and $\varepsilon>0$. There exists $k\in \mathbb{N}$ such that $\frac{1}{2^k}<\varepsilon$ and $\delta'>0$ such that every $\delta'$-pseudo-orbit $\{z_i\}_{i=0}^{\infty}$ for $G$ with $z_0=x$ can be $\varepsilon$-shadowed. Let $\delta=min\{\frac{1}{2^{k+1}},\delta'\}$, $\omega\in \Sigma_m^+$. Now take a $\delta$-pseudo-orbit $\{\xi_i\}_{i=0}^{\infty}$ of $F$ with $\xi_{i}=(\omega^{(i)},x_i)$ and $\xi_0=(\omega,x)$, where $\omega^{(i)}=(w_0^{(i)},w_1^{(i)},\cdots)$. Therefore
    \[
    D(F(\xi_{i}),\xi_{i+1})=max\{d_1(\sigma\omega^{(i)},\omega^{(i+1)}),d(f_{w_0^{(i)}}(x_i),x_{i+1})\}<\delta~for~all~i\ge 0.
    \]
    So we have
    \[
    d_1(\sigma\omega^{(i)},\omega^{(i+1)})<\delta\le\frac{1}{2^{k+1}}.
    \]
    That is $w^{(i)}_{j+1}=w^{(i+1)}_j$ for every $0\le j\le k+1$, $i\ge0$. Consider $v=(w_0^{(0)},w_0^{(1)},w_0^{(2)},\cdots)$. Since $d(f_{w_0^{(i)}}(x_i),x_{i+1})<\delta<\delta'$ by $D(F(\xi_{i}),\xi_{i+1})<\delta$, and $x_0=x\in Sh(G)$, there exists $y\in X$, such that $d(f_v^n(y),x_n)<\varepsilon$ for all $n\ge 0$. In addition for this $v$, one can see that $d_1(\sigma^n(v),\omega^{(n)})\le\frac{1}{2^{k+3}}<\varepsilon$ for all $n\ge0$. Therefore we have
    \[
    D(F^n(v,y),\xi_{n})<\varepsilon \quad for~all~n\ge0.
    \]
    Hence $(\omega,x)\in Sh(F)=(\Sigma_m^+\times X)^{deg}$. Next, we will prove that $x\in X^{deg}$.

    Suppose by contradiction that $x\notin X^{deg}$. Then the connected component $E$ of $X$ containing $x$(which is compact) has positive diameter diam($E$)>0. Obviously $\{\omega\}\times F$ is also a connected component of $\Sigma_m^+\times X$ containing $(\omega,x)$. This contradicts with $(\omega,x)\in(\Sigma_m^+\times X)^{deg}$. Hence $Sh(G)\subset X^{deg}$.

    \quad On the other hand, for any $x\in X^{deg}$, we claim that for any $\omega\in\Sigma_m^+$ we have $(\omega,x)\in(\Sigma_m^+\times X)^{deg}$. If not,  there exists $v\in\Sigma_m^+$, such that $(v,x)\notin(\Sigma_m^+\times X)^{deg}$. Then we obtain a connected component $E$ of $\Sigma_m^+\times X$ containing $(v,x)$ and $E\textbackslash(v,x)\neq\emptyset$. We divide into two cases:\\
    $\bf Case~1$: If $E\mid_X=\{x\}$, we have $E=A\times\{x\}$ where $A\subset\Sigma_m^+$.  Since $\Sigma_m^+$ is totally disconnected, there exists open sets $U_1,U_2\subset\Sigma_m^+$ such that $(U_1\cap A)\cap(U_2\cap A)=\emptyset$ and $A\subset U_1\cup U_2$, then we have $[(U_1\times\{x\})\cap E]\cap[(U_2\times\{x\})\cap E]=\emptyset$ and $E\subset(U_1\times\{x\})\cup(U_2\times\{x\})$. According to the definition of connectivity we know that $E$ is not connected. It's wrong.\\
    $\bf Case~2$: If $E\mid_X\textbackslash\{x\}\neq\emptyset$, $E\mid_X$ is not connected by $x\in X^{deg}$. There exists open sets $V_1,V_2\subset X$ such that $(V_1\cap E\mid_X)\cap(V_2\cap E\mid_X)=\emptyset$ and $E\mid_X\subset V_1\cup V_2$, then we have $[(\Sigma_m^+\times V_1)\cap E]\cap[(\Sigma_m^+\times V_2)\cap E]=\emptyset$ and $E\subset\Sigma_m^+\times V_1\cup \Sigma_m^+\times V_2$. So $E$ is not connected. It is conflict with $E$ is connected.

    \quad In all, we obtain $E=\{(v,x)\}$, therefore $(v,x)\in (\Sigma_m^+\times X)^{deg}$. It is conflicts with $(v,x)\notin (\Sigma_m^+\times X)^{deg}$, so we have $(\omega,x)\in (\Sigma_m^+\times X)^{deg}=Sh(F)$ for every  $\omega\in\Sigma_m^+$.

    \quad Next, we show $x\in Sh(G)$. Take $\omega\in \Sigma_m^+$ and $0<\varepsilon<\frac{1}{2}$. we have $(\omega,x)\in Sh(F)$ according to the above proof. For $0<\varepsilon<\frac{1}{2}$, there exists $\delta_{\omega}>0$ by Lemma \ref{lemma2.9}, such that
    \[
    \Sigma_m^+\subset \bigcup_{\omega\in\Sigma_m^+}B(\omega,\delta_{\omega}).
    \]
      Actually $\Sigma_m^+$ is compact with respect to metric $d_1$ by Tychonoff theorem. So there exists finite open cover such that
      \[
      \Sigma_m^+\subset\bigcup_{i=1}^nB(\omega^{(i)},\delta_i),
      \]
      where $\omega^{(i)}\in\Sigma_m^+$ and $(\omega^{(i)},x)\in Sh(F)$. Let $\delta=\underset{1\leq i \leq n}{min}\{\delta_i\}$, given $\delta$-pseudo-orbit $\{x_i\}_{i=0}^{\infty}$ for $G$ with $x_0=x$ and $d(f_{v_i}(x_i),x_{i+1})<\delta$ for all $i\ge0$ where $v=(v_0,v_1,\cdots)\in\Sigma_m^+$. Consider sequence $\{\xi_i\}_{i=0}^{\infty}$ with $\xi_i=(\sigma^iv,x_i)$, $\xi_0=(v,x)$. Obiously $\{\xi_i\}_{i=0}^{\infty}$ is a $\delta$-pseudo-orbit of $F$. Since $v\in\Sigma_m^+$, there exists $1\le k\le n$, such that $v\in B(\omega^{(k)},\delta_k)$. Also we have
        \[
        D(\xi_0,(\omega^{(k)},x))=max\{d(x,x),d_1(v,\omega^{(k)})\}<\delta_k.
        \]
        For above $\varepsilon$ and Lemma \ref{lemma2.9}, we know $\{\xi_i\}_{i= 0}^{\infty}$ can be $\varepsilon$-shadowed. There exists $(u,y)\in\Sigma_m^+\times X$ such that
        \[
        D(F^i(u,y),\xi_{i})<\varepsilon,
        \]
        that is $d_1(\sigma^iu,\sigma^iv)<\varepsilon $ and $d(f_u^i(y),x_i)<\varepsilon$ for every $i\ge 0$. Since $d_1(\sigma^iu,\sigma^iv)<\varepsilon<\frac{1}{2}$ we obtain $u=v$. Hence  $d(f_u^i(y),x_i)=d(f_v^i(y),x_i)<\varepsilon$ for all $i\ge 0$, that is $x\in Sh(G)$. Therefore $X^{deg}\subset Sh(G)$.
    \end{proof}

    \quad The following lemma is an improvement of Theorem \ref{thm2.13} and with some modifications on the proof process of Theorem \ref{thm2.13}.
    \begin{lem}\label{lemma3.6}
    Let $f:X\to X$ be a continuous map of a compact metric space $X$. If there exists $\delta>0$ satisfying $B(\gamma(Sh(f)),\delta)\subset R(f)$, then $Sh(f) \subset X^{deg}$.
    \end{lem}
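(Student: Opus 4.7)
The plan is to argue by contradiction, essentially following the strategy behind Theorem~\ref{thm2.13} but replacing the two-sided machinery (Lemmas~\ref{lemma2.11}, \ref{lemma2.12}, \ref{lemma2.14}) with a one-sided compactness argument that leverages the strengthened hypothesis $B(\gamma(Sh(f)),\delta)\subset R(f)$. Assume $x\in Sh(f)$ but $x\notin X^{deg}$, and let $C$ be the connected component of $X$ containing $x$, so that $r:=\mathrm{diam}(C)>0$. Since $x\in Sh(f)\cap C$, we have $C\subset\gamma(Sh(f))$, and so the entire open $\delta$-neighborhood of $C$ lies in $R(f)$. Fix $\varepsilon\in(0,r/8)$ with $\varepsilon<\delta$, and invoke the single-map version of Lemma~\ref{lemma3.1} (whose proof transfers verbatim) to produce $\eta\in(0,\delta)$ such that every $\eta$-pseudo-orbit $\{y_i\}_{i\ge 0}$ with $d(y_0,x)<\eta$ is $\varepsilon$-shadowed by some point of $X$.

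Using connectedness of $C$ I next choose an $\eta'$-chain $x=p_0,p_1,\ldots,p_k\in C$ with $d(p_0,p_k)\ge r/2$, where $\eta'$ is taken small enough (via uniform continuity of $f$) that the forthcoming concatenation is genuinely an $\eta$-pseudo-orbit. Each $p_i\in C\subset R(f)$ is recurrent, so there exist positive integers $N_i$ with $d(f^{N_i}(p_i),p_i)<\eta'$. Stitching the segments $p_i,f(p_i),\ldots,f^{N_i-1}(p_i)$ together with short chain jumps yields an $\eta$-pseudo-orbit from $x$ to $p_k$; reversing the chain produces one back to $x$, and periodic repetition gives an infinite $\eta$-pseudo-orbit $\{\xi_j\}_{j\ge 0}$ of some period $T$ with $\xi_{nT}=x$ and $\xi_{nT+M}=p_k$ for every $n\ge 0$ and a fixed $0<M<T$. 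Shadowability delivers $z\in X$ with $d(f^j(z),\xi_j)<\varepsilon$ for all $j$; in particular $d(f^{nT}(z),x)<\varepsilon$ and $d(f^{nT+M}(z),p_k)<\varepsilon$ for every $n$. By compactness I extract a subsequence $n_i\to\infty$ with $f^{n_iT}(z)\to z^*\in\overline{B(x,\varepsilon)}$, and continuity of $f^M$ yields $f^M(z^*)\in\overline{B(p_k,\varepsilon)}$. Since $d(z^*,x)<\delta$, the hypothesis forces $z^*\in R(f)$.

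The hard part is turning this configuration into an actual contradiction, now that the homeomorphism-only tools are unavailable. My plan is to combine recurrence of $z^*$ with a clopen-separation argument at the component $C$: in a compact metric space, $C$ is an intersection of clopen sets, so for sufficiently small $\varepsilon$ there is a clopen set $U$ with $C\subset U\subset B(C,\varepsilon)$, whence every connected subset of $X$ meeting $U$ and $X\setminus U$ would violate connectedness. Recurrence of $z^*$ produces iterates $f^{m_i}(z^*)$ arbitrarily close to $z^*$, hence to $x\in C$; continuity of $f^M$ then places $f^{m_i+M}(z^*)$ near $f^M(z^*)$, hence near $p_k\in C$. Iterating this so as to chain the shadow-orbit inside shrinking clopen neighbourhoods of $C$ while simultaneously being forced near $p_k$ should contradict $d(x,p_k)\ge r/2>4\varepsilon$. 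Making this final clopen-plus-recurrence step precise is where the strong hypothesis $B(\gamma(Sh(f)),\delta)\subset R(f)$, rather than mere pointwise recurrence on $\gamma(Sh(f))$, becomes essential, and it is the main obstacle of the proof.
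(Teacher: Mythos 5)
Your setup (argue by contradiction, build a $\delta'$-chain of recurrent points spanning the component, concatenate it into a periodic pseudo-orbit, and shadow it) matches the first half of the paper's argument, but the proposal stalls exactly where you admit it does, and the proposed ``clopen-plus-recurrence'' finish does not close the gap. Having $f^{nT}(z)$ near $x$ and $f^{nT+M}(z)$ near $p_k$ for every $n$ is not contradictory: both $x$ and $p_k$ lie in the component $C$, hence in any clopen $U\supset C$, and an orbit is not a connected set, so no separation of $U$ from $X\setminus U$ is ever violated. What is missing is a mechanism that \emph{traps} the shadowing orbit near a single point and then drags every chain point into that trap.

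The paper supplies this mechanism as follows. Since $z\in R(f)\subset\Omega(f)$ and $z\in Sh(f)$, Lemma \ref{lemma2.11} produces $k$ and $y$ with $g^{n}(y)\in B(z,\varepsilon)$ for all $n\ge 0$, where $g=f^{k}$ (and $Sh(f)=Sh(g)$, $R(f)=R(g)$ by Lemmas \ref{lemma2.12} and \ref{lemma2.14}). The chain $p_1,\dots,p_N$ covering $E$ is built starting at $p_1=y$, and the pseudo-orbit is arranged so that its tail is the genuine $g$-orbit of $y$; consequently the shadowing point $x$ satisfies $g^{i}(x)\in B(z,2\varepsilon)$ for all $i\ge c$. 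Then, because $d(g^{n_i}(x),p_i)<\varepsilon\le\delta$ and $p_i\in E\subset\gamma(Sh(f))$, the hypothesis places $g^{n_i}(x)$ itself in $R(g)$ --- this is precisely where the $\delta$-neighborhood in the hypothesis, rather than recurrence on $\gamma(Sh(f))$ alone, is indispensable --- so some return time $k_i\ge c$ gives $d(g^{n_i}(x),g^{k_i}(x))<\varepsilon$ with $g^{k_i}(x)$ already trapped in $B(z,2\varepsilon)$. A triangle inequality then yields $E\subset B(z,5\varepsilon)$, hence $\mathrm{diam}(E)\le 10\varepsilon$, contradicting $\varepsilon<\tfrac{1}{11}\mathrm{diam}(E)$. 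In your version recurrence is applied only to $z^{*}$ and to the $p_i$ (which merely generate the pseudo-orbit) and never to the iterates of the shadowing point, so no analogous squeeze is available. A further caution: ``reversing the chain'' must mean re-descending through $p_k,\dots,p_0$ using a forward recurrence segment at each $p_i$, since a pseudo-orbit of a non-invertible map cannot literally be reversed.
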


    \begin{proof}
    Take $z\in Sh(f)$ and suppose by contradiction that $z\notin X^{deg}$. Then the connected component $\gamma(\{z\})$, denoted by $E$, is compact and $diam(E)>0$. Take $0<\varepsilon<\{\frac{1}{11}diam(E),\delta\}$. Obviously $z\in B(\gamma(Sh(f)),\delta)$, so $z\in R(f)\subset\Omega(f)$. There exists $k\in\mathbb{N}^+$ and $y\in X$ such that $f^{nk}(y)\in B(z,\varepsilon)$ for every $n\ge0$ by Lemma \ref{lemma2.11}. Define $g=f^k$. Then
    \begin{equation}\label{1}
    g^n(y)\in B(z,\varepsilon)~for~all~n\ge0.
    \end{equation}

    On the other hand $z\in Sh(g)$ by Lemma \ref{lemma2.12}. Then for above $\varepsilon$ ,there exists $\delta'>0$ by Lemma \ref{lemma3.1}. We can assume $\delta'<\varepsilon$. Since $E$ is compact and connected, we can choose a sequence $y=p_1,p_2,\cdots,p_N\in E$ such that $d(p_i,p_{i+1})\le\frac{\delta'}{2}$ for $1\le i\le N-1$ and
    \begin{equation}\label{3}
    E\subset\bigcup\limits_{i=1}^NB(p_i,\delta').
    \end{equation}

    Also $R(g)=R(f^k)=R(f)$ by Lemma \ref{lemma2.14}, and $p_i\in E\subset B(\gamma(Sh(f)),\delta)\subset R(f)=R(g)$. From this we can find positive integers $c(i)~(1\le i\le N)$ such that $d(p_i,g^{c(i)}(p_i))\le\frac{\delta'}{2}$ for all $1\le i\le N$. Since $z\in E\subset\bigcup\limits_{i=1}^NB(p_i,\delta')$, there exists $p_{i_z}~(1\le i_z\le N)$ such that $z\in B(p_{i_z},\delta')$. We define the sequence $\{\xi_i\}_{i=0}^{\infty}$ as follow
    $$
    \begin{aligned}
	\xi_i & =g^i(p_{i_z}) & & \text { if } 0\le i\le c(i_z)-1 , \\
	\xi_{c(i_z)+i} & =g^i(p_{i_z+1}) & & \text { if } 0 \leq i \leq c(i_z+1)-1, \\
	& \vdots & & \\
	\xi_{c(i_z)+\cdots+c(N-2)+i} & =g^i(p_{N-1}) & & \text { if } 0 \leq i \leq c(N-1)-1, \\
	\xi_{c(i_z)+\cdots+c(N-1)+i} & =g^i\left(p_N\right) & & \text { if } 0 \leq i \leq c(N)-1, \\
	\xi_{c(i_z)+\cdots+c(N)+i} & =g^i\left(p_{N-1}\right) & & \text { if } 0 \leq i \leq c(N-1)-1, \\
	& \vdots & & \\
	\xi_{c(1)+\cdots+c(i_z-1)+2\{c(i_z)+\cdots+c(N-1)\}+c(N)+i} & =g^i\left(p_1\right) & & \text { if } i \geq 0 .
    \end{aligned}
    $$
    Obviously $\{\xi_i\}_{i=0}^{\infty}$ is a $\delta'$-pseudo-orbit of $g$ with $\xi_0=p_{i_z}\in B(z,\delta')$. Then there exists $x\in X$ such that $d(g^n(x),\xi_n)<\varepsilon$ for every $n\ge 0$ by Lemma \ref{lemma2.10}. From the definition of $\{\xi_i\}_{i= 0}^{\infty}$ we conclude that there exists integers $n_1,\cdots,n_N$ satisfying $d(g^{n_i}(x),p_i)<\varepsilon$ for every $1\le i\le N$. Take $c=c(1)+\cdots+c(i_z-1)+2\{c(i_z)+\cdots+c(N-1)\}+c(N)$, we obtain
    \[
    d(g^{i+c}(x),g^i(y))<\varepsilon \quad (for~i\ge 0).
    \]
    This combined with (\ref{1}) yields
    \[
    g^i(x)\in B(z,2\varepsilon)~whenever~i\ge c.
    \]
    Since $d(g^{n_i}(x),p_i)<\varepsilon\le\delta$ and $p_i\in E$ for every $1\le i\le N$, we have $g^{n_i}(x)\in B(\gamma(sh(f)),\delta)\subset R(g)$. Thus for $g^{n_i}(x)$, there exists $k_i\ge c$ such that $d(g^{k_i}(x),g^{n_i}(x))<\varepsilon$.
    Now take $w\in E$. It follows from (\ref{3}) that $d(w,p_i)<\delta'$ for some $1\le i\le N$. Then $d(g^{n_i}(x),w)\le d(g^{n_i}(x),p_i)+d(p_i,w)<\varepsilon+\delta'<2\varepsilon$. Now we have two cases:\\
    $\bf Case~1$: If $0\le n_i<c$, then
    $$
    \begin{aligned}
    d(w,z)&\le d(w,g^{n_i}(x))+d(g^{n_i}(x),g^{k_i}(x))+d(g^{k_i}(x),g^{k_i-c}(p_1))+d(g^{k_i-c}(p_1),z)\\
    &<2\varepsilon+\varepsilon+\varepsilon+\varepsilon\\
    &=5\varepsilon.
    \end{aligned}
    $$
    $\bf Case~2$: If $ n_i\ge c$, then
    $$
    \begin{aligned}
    	d(w,z)&\le d(w,g^{n_i}(x))+d(g^{n_i}(x),g^{n_i-c}(p_1))+d(g^{n_i-c}(p_1),z)\\
    	&<2\varepsilon+\varepsilon+\varepsilon\\
    	&=4\varepsilon.
    \end{aligned}
    $$
    We conclude that $E\subset B(z,5\varepsilon)$ by the arbitrariness of $w$ and adove cases, so $diam(E)<10\varepsilon$. This contradicts the choice of $\varepsilon$ therefore $z\in X^{deg}$. As $z\in Sh(f)$ is arbitrary, we obtain $Sh(f)\subset X^{deg}$.
   \end{proof}
   \noindent$\textbf{Proof~of~Theorem~\ref{thm1.1}}$. Let $f_0,f_1,\cdots,f_{m-1}$ be continuous self-maps on compact metric space $X$ and $G$ the free semigroup generated by $\{f_0,f_1,\cdots,f_{m-1}\}$. We have that $Sh(G)$ is a Borel set by Lemma \ref{lemma3.4}. Take $K=X$ in Lemma \ref{lemma3.2}, then we obtain that $G$ has the POTP if and only if $Sh(G)=X$. Finally, since $\Omega(G)\subset CR(G)$ we have that if $CR(G)\subset Sh(G)$, then $CR(G)=\Omega(G)$ by Lemma \ref{lemma3.3}. $\hfill\square$

   \noindent$\textbf{Proof~of~Theorem~\ref{thm1.2}}$. Let $f_0,f_1,\cdots,f_{m-1}$ be continuous self-maps on compact metric space $X$, $G$ the free semigroup generated by $\{f_0,f_1,\cdots,f_{m-1}\}$ and $F$ the skew-product transformation corresponding to $\{f_0,\cdots,f_{m-1}\}$. If there exists $\delta>0$ such that $B(\gamma(Sh(F)),\delta)\subset R(F)$, we obtain $Sh(F)\subset(\Sigma_m^+\times X)^{deg}$ by Lemma \ref{lemma3.6}. From the proof process of Lemma \ref{lemma3.5}, it can be seen that $Sh(G)\subset X^{deg}$. $\hfill\square$

   \quad The following example will state the rationality of the condition in Theorem \ref{thm1.2}.
   \begin{example}
   Let $X=\mathbb{S}^1$ is one-dimensional torus, $f_0,f_1$ are all irrational rotation with irrational independence. Assume $f_0(x)=x+\alpha_0~(mod~1)$ and  $f_1(x)=x+\alpha_1~(mod~1)$ with $\alpha_0+\alpha_1\in\mathbb{R}\textbackslash\mathbb{Q}$. The periodic point $\omega=(0,1)^{\infty}$ is a minimal point in the dynamical system  $(\Sigma_2^+,\sigma)$. For minimal sub-system $(\overline{Orb(\omega,\sigma)},\sigma)$, define skew-product transformation $F: \overline{Orb(\omega,\sigma)} \times \mathbb{S}^1 \to \overline{Orb(\omega,\sigma)} \times \mathbb{S}^1$ corresponding to $\{f_0,f_1\}$. There exists $\delta>0$ such that  $B(\gamma(Sh(F)),\delta)\subset R(F)$ according to the following proof.

   Clearly $\overline{Orb(\omega,\sigma)}=\{\omega,\sigma(\omega)\}$, then we prove that $R(F)=\overline{Orb(\omega,\sigma)}\times \mathbb{S}^1$. Take $x\in\mathbb{S}^1$. For $(\omega,x)\in\overline{Orb(\omega,\sigma)}\times \mathbb{S}^1$, define $g_0(x)=f_1\circ f_0(x)=x+\alpha_0+\alpha_1~(mod~1)$. Since $\alpha_0+\alpha_1\in\mathbb{R}\textbackslash\mathbb{Q}$, we obtain $g_0$ is minimal, that is $R(g_0)=\mathbb{S}^1$. For $x\in R(g_0)$, there exists an increasing sequence $\{n_k\}_{k\ge0}$ of positive integers such that $\lim\limits_{k\to\infty}g_0^{n_k}(x)=x$, then we have
   \[
   \lim\limits_{k\to\infty}F^{2n_k}(\omega,x)=\lim\limits_{k\to\infty}(\sigma^{2n_k}(\omega),f_{\omega}^{2n_k}(x))=\lim\limits_{k\to\infty}(\omega,g_0^{n_k}(x))=(\omega,x),
   \]
   that is $(\omega,x)\in R(F)$. For $(\sigma(\omega),x)$, define $g_1(x)=f_0\circ f_1(x)=x+\alpha_1+\alpha_1~(mod~1)$. Similarly we have $R(g_1)=\mathbb{S}^1$. For $x\in R(g_1)$, there exists an increasing sequence $\{n_i\}_{i\ge0}$ of positive integers such that $\lim\limits_{i\to\infty}g_1^{n_i}(x)=x$, then we have
   \[
   \lim\limits_{i\to\infty}F^{2n_i}(\sigma(\omega),x)=\lim\limits_{i\to\infty}(\sigma^{2n_i+1}(\omega),f_{\sigma(\omega)}^{2n_i}(x))=\lim\limits_{i\to\infty}(\sigma(\omega),g_1^{n_i}(x))=(\sigma(\omega),x),
   \]
   that is $(\sigma(\omega),x)\in R(F)$. In all we have $R(F)=\overline{Orb(\omega,\sigma)}\times \mathbb{S}^1$, so there exists $\delta>0$ such that $B(\gamma(Sh(F)),\delta)\subset R(F)$.
   \end{example}

   \section{Declarations}
   We declare that we have no financial and personal relationships with other people or organizations that can inappropriately influence our work, there is no professional or other personal interest of any nature or kind in any product, service or company that could be construed as influencing the position presented in, or the review of, the manuscript entitled, "Shadowable points of free semigroup actions".

   \nocite{MR2831912}
   \nocite{MR2129113}
   \nocite{MR3842255}
   \nocite{MR1727170}
   \nocite{MR0648108}
   \bibliographystyle{abbrv}
   \bibliography{reference0}

\end{document}